\newtheorem{theorem}{Theorem}[section]
\newtheorem{lemma}[theorem]{Lemma}
\newtheorem{corollary}[theorem]{Corollary}
\newtheorem{proposition}[theorem]{Proposition}
\theoremstyle{definition}
\newtheorem{example}[theorem]{Example}
\theoremstyle{remark}
\newtheorem{remark}[theorem]{Remark}
\numberwithin{equation}{section}
\begin{document}

\title{On the slice genus of quasipositive knots in indefinite 4-manifolds}
\author{David Baraglia}

\address{School of Mathematical Sciences, The University of Adelaide, Adelaide SA 5005, Australia}

\email{david.baraglia@adelaide.edu.au}


\date{\today}

\begin{abstract}
Let $X$ be a closed indefinite $4$-manifold with $b_+(X) = 3 \; ({\rm mod} \; 4)$ and with non-vanishing mod $2$ Seiberg--Witten invariants. We prove a new lower bound on the genus of a properly embedded surface in $X \setminus B^4$ representing a given homology class and with boundary a quasipositive knot $K \subset S^3$. In the null-homologous case our inequality implies that the minimal genus of such a surface is equal to the slice genus of $K$. If $X$ is symplectic then our lower bound differs from the minimal genus by at most $1$ for any homology class that can be represented by a symplectic surface. Along the way, we also prove an extension of the adjunction inequality for closed $4$-manifolds to classes of negative self-intersection without requiring $X$ to be of simple type.
\end{abstract}

\maketitle


\section{Introduction}

An important problem in the study of smooth $4$-manifolds is to determine the minimal genus of an embedded surface representing a given homology class. The relative version of this problem for $4$-manifolds with boundary has also received considerable attention \cite{os1,nou,chh,sat,mmsw,cn,hera,mmp,kms,kmt,imt}.

Consider a closed, connected, oriented smooth $4$-manifold $X$ and let $X_0$ be the $4$-manifold with boundary obtained by removing an open ball from $X$. Given an oriented knot $K \subset S^3 = \partial X_0$ and a relative homology class $a \in H_2(X_0 , \partial X_0 ; \mathbb{Z}) \cong H_2( X ; \mathbb{Z})$, we seek the minimal genus of a properly embedded oriented surface $\Sigma \subset X$ with $\partial \Sigma = K$ and $[\Sigma] = a$. We define the {\em slice genus of $K$ with respect to $(X,a)$} to be the minimum genus of such a surface and denote it by $g_4(K,X,a)$ or $g_4(K,X_0,a)$. Of particular interest is the case that $[\Sigma]$ is null-homologous. In this case we call the minimal genus of such a surface the {\em $H$-slice  genus of $K$ in $X$} and denote it by $g_H(K,X)$ or $g_H(K,X_0)$. In the case that $X = S^4$, $X_0 = B^4$ is the $4$-ball and $g_H(K,X) = g_4(K)$ is the usual slice genus of $K$.

To motivate our results we first recall the following relative genus bound for surfaces in definite $4$-manifolds:

\begin{theorem}[Ozsv\'ath--Szab\'o \cite{os1}]\label{thm:osin}
Let $X_0$ be a smooth, compact, oriented, negative-definite $4$-manifold with $b_1(X_0) = 0$ and $\partial X_0 = S^3$. For any smooth, properly embedded surface $\Sigma \subset X_0$ bounding a knot $K$, we have
\[
2g(\Sigma) \ge [\Sigma]^2 + \left| [\Sigma] \right| + 2\tau(\Sigma).
\]
\end{theorem}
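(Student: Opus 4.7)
My plan is to close up $\Sigma$ via a large framed surgery on $K$, reducing the problem to a bound on a closed surface in a negative-definite $4$-manifold with rational homology sphere boundary, and then to apply Ozsv\'ath--Szab\'o's analog of Donaldson's diagonalization theorem together with the large surgery formula relating $d$-invariants to $\tau(K)$.

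First, for $N \gg 0$, I would attach a $2$-handle $h$ to $X_0$ along $K$ with framing $-N$, obtaining a compact $4$-manifold $W = X_0 \cup h$ with boundary $Y_N = S^3_{-N}(K)$. For $N$ large enough the intersection form on $H_2(W;\mathbb{Z})$ remains negative-definite, gaining one new generator of self-intersection $-N$. The union $\widehat{\Sigma} = \Sigma \cup (\text{core of } h)$ is a closed surface of the same genus as $\Sigma$, representing a class $\alpha \in H_2(W;\mathbb{Z})$ with $\alpha \cdot \alpha = [\Sigma]^2 - N$.

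Next, for any characteristic vector $c \in H^2(W;\mathbb{Z})$ restricting on $Y_N$ to a spin$^c$ structure $\mathfrak{t}$, the Ozsv\'ath--Szab\'o inequality for negative-definite $4$-manifolds bounding a rational homology sphere gives
\[
c \cdot c + b_2(W) \le 4 d(Y_N, \mathfrak{t}).
\]
To extract an adjunction-style bound on $g(\widehat{\Sigma})$, I would choose $c$ so that $c \cdot \alpha = \alpha \cdot \alpha + 2 - 2g(\widehat{\Sigma}) + 2k$ for an integer $k$, and optimize over the freedom to modify $c$ orthogonally to $\alpha$ and over the choice of $k$. The parameter $k$ controls which spin$^c$ structure on $Y_N$ the class $c$ restricts to, and minimizing the resulting quadratic form in $k$ is what naturally produces the summand $|[\Sigma]|$ on the right-hand side of the target inequality.

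Finally, I would invoke the large surgery formula of Ozsv\'ath--Szab\'o: for $N$ sufficiently large, the difference $d(Y_N, \mathfrak{t}_k) - d(L(N,1), \mathfrak{s}_k)$ is computable in terms of the integer invariants $V_k(K)$ of the knot Floer complex, and for the minimizing choice of $k$ this contribution is exactly $-2\tau(K)$. Assembling these ingredients and letting $N \to \infty$ yields the desired inequality $2g(\Sigma) \ge [\Sigma]^2 + |[\Sigma]| + 2\tau(\Sigma)$. The main obstacle I anticipate lies in the adjunction/optimization step: one must pin down simultaneously the correct characteristic vector and the correct spin$^c$ structure on $Y_N$ so that the Ozsv\'ath--Szab\'o inequality becomes sharp enough in the limit to recover $2\tau(K)$, rather than some weaker quantity extracted from the full tower $\{V_i(K)\}$; keeping track of the shift by $b_2(W)$ and the $L(N,1)$ baseline $d$-invariants cleanly is the delicate bookkeeping step.
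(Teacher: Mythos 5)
You should first note that the paper itself contains no proof of Theorem \ref{thm:osin}: it is quoted from \cite{os1} as background (and the $\tau(\Sigma)$ in the displayed inequality is a typo for $\tau(K)$), so your proposal has to be measured against Ozsv\'ath--Szab\'o's original argument. Your opening moves do match theirs: attach a $(-N)$-framed $2$-handle along $K$, cap $\Sigma$ with the core, and use Donaldson diagonalization (after capping $X_0$ with $B^4$) to turn an optimization over characteristic vectors into the term $\left| [\Sigma] \right|$. But the engine you then invoke cannot drive the proof. The correction-term inequality $c \cdot c + b_2(W) \le 4d(Y_N , \mathfrak{t})$ holds for \emph{every} characteristic vector $c$, so imposing $c \cdot \alpha = \alpha \cdot \alpha + 2 - 2g + 2k$ is merely a way of labelling $c$ by $g$ and $k$: the genus of $\widehat{\Sigma}$ never enters any inequality this way, since correction terms of $\partial W$ place no constraint on the genus of a closed surface of negative square in a negative-definite $W$. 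Worse, with your orientations the pipeline is literally vacuous. Test it on $X_0 = B^4$, $[\Sigma] = 0$: then $b_2(W) = 1$, $\alpha^2 = -N$, the characteristic vectors satisfy $c^2 = -(2k-N)^2/N$, and by the large-surgery formula (Rasmussen, Ni--Wu) together with $d(-Y,\mathfrak{t}) = -d(Y,\mathfrak{t})$ one has, with the usual conventions,
\[
d\bigl(S^3_{-N}(K),[k]\bigr) = -\,d\bigl(L(N,1),[k]\bigr) + 2\max\bigl(V_k(\overline{K}),\,V_{N-k}(\overline{K})\bigr), \qquad d\bigl(L(N,1),[k]\bigr) = \frac{(2k-N)^2 - N}{4N},
\]
so the inequality $c^2 + 1 \le 4d$ collapses to $0 \le 8\max\bigl(V_k(\overline{K}), V_{N-k}(\overline{K})\bigr)$, true for every $k$ and every knot. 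The $V$-invariants that appear are those of the \emph{mirror}, and they enter with the harmless sign; there is no choice of $k$ producing a contribution $-2\tau(K)$, and the theorem's content in this test case, namely $g(\Sigma) \ge \tau(K)$, is invisible to the method.

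Even setting orientations aside, $d$-invariants of large surgeries only encode the $\{V_k\}$, which control $\tau$ at best through $\nu^+$-type invariants; since $\tau(K) \le \nu^+(K)$ can be strict \cite{howu}, no bookkeeping with the $L(N,1)$ baseline can return \emph{exactly} $2\tau(K)$, as your final step asserts. The proof in \cite{os1} does not use correction terms at all: $\tau(K)$ is defined as the critical level of the Alexander filtration on $\widehat{CF}(S^3)$, and for $N \gg 0$ the groups $\widehat{HF}(S^3_{-N}(K),[m])$ are identified with homologies of filtration subquotients in such a way that the map induced by the $2$-handle cobordism in the spin$^c$ structure indexed by $m$ is nontrivial with threshold exactly $m = \tau(K)$. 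Composing with the map of the punctured negative-definite $X_0$ (nontrivial by an $HF^\infty$ argument), tracking the absolute grading shift $\bigl(c_1(\mathfrak{s})^2 + b_2\bigr)/4$, and maximizing $\langle c_1(\mathfrak{s}) , [\Sigma] \rangle$ over characteristic vectors of the diagonalized lattice then yields $2g(\Sigma) \ge [\Sigma]^2 + \left| [\Sigma] \right| + 2\tau(K)$: the genus enters through the filtration behaviour of the cobordism maps, not through $d$-invariants of the surgered manifold. To salvage your plan you would need such map-based refinements ($\nu$, $\nu^+$, or the relative adjunction machinery of \cite{hera}), not the Ozsv\'ath--Szab\'o intersection-form inequality alone.
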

Here $\left| [\Sigma] \right|$ is defined as $\sum_{i=1}^{n} | \langle [\Sigma] , e_1 \rangle |$, where $e_1, \dots , e_n$ is an orthonormal basis for $H^2( X_0 ; \mathbb{Z})$, (which exists by Donaldson's diagonalisation theorem) and $\tau(K)$ is the Ozsv\'ath--Szab\'o tau-invariant, defined in \cite{os1}. Taking $\Sigma$ to be null-homologous, we obtain a lower bound on the $H$-slice genus of $K$:
\begin{equation}\label{equ:htau}
g_H(K,X_0) \ge \tau(K).
\end{equation}
A surprising feature of this bound is that it does not depend on the $4$-manifold $X_0$.

Suppose now that $K$ is quasipositive. This means that $K$ is the braid closure of a braid that is a product of the standard generators $\sigma_1, \dots , \sigma_{n-1}$ and their conjugates \cite{rud}. It is shown in \cite{hed} that as a consequence of the inequality given in \cite{pla}, one has $\tau(K) = g_4(K)$ for any quasipositive knot. For instance, if $T_{p,q}$ denotes the $(p,q)$-torus knot for positive coprime integers $p,q$, then $\tau(T_{p,q}) = g_4(T_{p,q}) = (p-1)(q-1)/2$. Thus for quasipositive knots the inequality (\ref{equ:htau}) gives $g_H(K,X) \ge g_4(K)$. But we obviously have $g_H(K,X) \le g_4(K)$ and so we deduce that the $H$-slice genus of any quasipositive knot in any negative definite $4$-manifold with $b_1(X)=0$ is equal to the slice genus. 

In this paper we prove an analogue of Theorem \ref{thm:osin} for quasipositive knots and links in a large class of indefinite $4$-manifolds. Let $L \subset S^3 = \partial B$ be an oriented link. Define the {\em Murasugi characteristic} $\chi_4(L)$ of $L$ to be the maximum of the Euler characteristic of any smooth, properly embedded, oriented surface in $B$ bounding $L$ and having no closed components \cite{mur,bofo}. If $L$ is a knot, then $\chi_4(L) = 1-2g_4(L)$, where $g_4(L)$ is the slice genus. 

Given a spin$^c$-structure $\mathfrak{s}$ on a compact, oriented smooth $4$-manifold $X$ with $b_+(X) > 1$, we let $SW(X , \mathfrak{s}) \in \mathbb{Z}$ denote the Seiberg--Witten invariant of $(X , \mathfrak{s})$. We also let
\[
d(X, \mathfrak{s}) = \frac{ c_1(\mathfrak{s})^2 - \sigma(X) }{4} - 1 +b_1(X) - b_+(X)
\]
denote the expected dimension of the Seiberg--Witten moduli space.

\begin{theorem}\label{thm:qpg}
Let $X$ be a smooth, closed, oriented $4$-manifold with $b_1(X)=0$ and $b_+(X) = 3 \; ({\rm mod} \; 4)$. Suppose that there is a spin$^c$-structure $\mathfrak{s}$ with $SW(X , \mathfrak{s}) = 1 \; ({\rm mod} \; 2)$. Let $X_0$ be the $4$-manifold with boundary $S^3$ obtained by removing an open ball from $X$ and let $L \subset S^3$ be a quasipositive link. Then for any connected, smooth, oriented, properly embedded surface $\Sigma \subset X_0$ bounding $L$, we have
\[
-\chi(\Sigma) \ge [\Sigma]^2 + | \langle [\Sigma] , c_1(\mathfrak{s}) \rangle | - \chi_4(L).
\]
In particular, if $L = K$ is a quasipositive knot, then
\[
2g(\Sigma) \ge [\Sigma]^2 + |\langle [\Sigma] , c_1(\mathfrak{s}) \rangle | + 2g_4(K).
\]
\end{theorem}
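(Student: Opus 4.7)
The plan is to reduce Theorem~\ref{thm:qpg} to the extended adjunction inequality for closed surfaces in $X$ (the ``along the way'' result promised by the abstract) via a capping argument that uses the quasipositivity of $L$ in an essential way. By Rudolph's characterization of quasipositive links, $L$ bounds a quasipositive surface $F \subset B^4$ with $\chi(F) = \chi_4(L)$, and such an $F$ can be taken to be a smooth piece of a complex algebraic curve in $B^4 \subset \mathbb{C}^2$, hence symplectic for $\omega_{\mathrm{std}}$. Gluing $\Sigma$ to $F$ along their common boundary $L = \partial \Sigma = \partial F$ yields a closed oriented surface $\Sigma' = \Sigma \cup_L F$ embedded in $X = X_0 \cup_{S^3} B^4$. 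Since $F$ is contained in a ball it is null-homologous in $X$, so $[\Sigma'] = [\Sigma] \in H_2(X;\mathbb{Z})$ under the identification $H_2(X_0, \partial X_0) \cong H_2(X)$, and $\chi(\Sigma') = \chi(\Sigma) + \chi_4(L)$.

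The second step is to apply the paper's extended adjunction inequality for closed surfaces in $X$ to $\Sigma'$. Under the hypotheses $b_+(X) \equiv 3 \pmod 4$ and mod-$2$ non-vanishing Seiberg--Witten invariant, this inequality should be valid regardless of the sign of $[\Sigma']^2$ and without a simple-type assumption, producing a bound of the schematic form $-\chi(\Sigma') \ge [\Sigma']^2 + |\langle c_1(\mathfrak{s}), [\Sigma']\rangle|$. Substituting $\chi(\Sigma') = \chi(\Sigma) + \chi_4(L)$ and $[\Sigma'] = [\Sigma]$ then produces a bound on $-\chi(\Sigma)$ in the shape required by the theorem; specializing to a knot $K$ ($|L|=1$) and using $\chi_4(K) = 1 - 2g_4(K)$ recovers the second displayed inequality.

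The main obstacle is a \emph{sign discrepancy} of $2\chi_4(L)$: a naive substitution produces
\[
-\chi(\Sigma) \ge [\Sigma]^2 + |\langle c_1(\mathfrak{s}), [\Sigma]\rangle| + \chi_4(L),
\]
which has a $+\chi_4(L)$ term instead of the $-\chi_4(L)$ in the theorem, and the gap is strictly positive in the non-slice case. Closing it requires using not just the equality $\chi(F) = \chi_4(L)$, but the \emph{quasipositivity} of $F$ itself---equivalently, the slice--Bennequin equality $\mathrm{sl}(L) = -\chi_4(L)$ characterizing transverse representatives of quasipositive links in $(S^3, \xi_{\mathrm{std}})$. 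Folding this contact-geometric datum into the closed-surface adjunction---through a neck-stretching argument along $S^3 = \partial B^4$ that isolates the contribution of the symplectic cap $F$ from the generic piece $\Sigma$, or through a relative Seiberg--Witten computation for $(B^4, F)$ carried out in the Pin$(2)$-equivariant Bauer--Furuta framework underlying the paper's extended adjunction---is where the principal technical work of the proof lies.
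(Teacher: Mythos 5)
There is a genuine gap, and it sits exactly where you locate it: your proposal ends by naming the $2\chi_4(L)$ sign discrepancy as ``the principal technical work'' without supplying the idea that resolves it. The paper itself points out (in the remark following Theorem \ref{thm:qpg1}) that your first two steps --- capping $\Sigma$ inside $B^4$ with a surface of Euler characteristic $\chi_4(L)$ and applying a closed-surface adjunction inequality --- only yield the much weaker bound with $+\chi_4(L)$. Neither of your two suggested repairs is carried out or even sketched at the level of a mechanism: a neck-stretch along $S^3$ ``isolating the contribution of the symplectic cap'' has no known implementation that converts the slice--Bennequin equality into an improvement of the closed adjunction bound, and a ``relative Seiberg--Witten computation for $(B^4,F)$'' is precisely the kind of relative invariant technology (cf.\ \cite{imt}) whose quasipositive enhancement is the open content here. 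There is also a circularity problem in your second step: the adjunction inequality valid ``regardless of the sign of $[\Sigma']^2$'' is Theorem \ref{thm:adjneg}, which the paper \emph{deduces} from Theorem \ref{thm:qpg} (taking $L$ to be the unknot); the independently proven input, Proposition \ref{prop:bfadj}, requires $[\Sigma]^2 \ge 0$, which your capped surface need not satisfy.

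The paper's actual route caps on the \emph{outside} rather than the inside, and this is where the sign flips. Realize $L$ as the transverse link of an algebraic curve $\Gamma \subset \mathbb{C}^2$ of degree $d$ (Rudolph), and cap $\Sigma$ with $C_0$, the part of a smoothed completion of $\Gamma$ lying in $W = \mathbb{CP}^2 \setminus B$. By Rudolph's sharpness result (``quasipositivity as an obstruction to sliceness''), the piece of the curve \emph{inside} the ball realizes $\chi_4(L)$ exactly, so $\chi(C_0) = 3d - d^2 - \chi_4(L)$ (Lemma \ref{lem:c0}); the minus sign you need is the local genus-minimality of the quasipositive piece, not an extra analytic input. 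The closed surface $\Sigma' = \Sigma \cup_L C_0$ now lives in $Y = X \# \mathbb{CP}^2$, where no adjunction inequality applies since $b_+(Y) \equiv 0 \pmod 4$; the second key idea is to pass to the double cover of $Y$ branched over a nonsingular sextic $S$ disjoint from $\overline{B}$ and transverse to the curve. This cover is $K3 \# X \# X$, whose Bauer--Furuta invariant is $\eta^3 \neq 0$ by \cite[Proposition 4.4]{b2} and the connected sum formula --- this is where the hypotheses $b_+(X) \equiv 3 \pmod 4$ and $SW \equiv 1 \pmod 2$ are consumed. The preimage $\widetilde{\Sigma}$ of $\Sigma'$ is a double cover branched at the $6d$ intersection points with $S$ (all positive, again by quasipositivity/algebraicity of $C_0$), and after enlarging $d$ by adding lines to $\Gamma$ one gets $[\widetilde{\Sigma}]^2 = 2d^2 + 2[\Sigma]^2 > 0$, so Proposition \ref{prop:bfadj} applies legitimately; Riemann--Hurwitz then converts the adjunction bound on $\widetilde{\Sigma}$ into the stated inequality, with the $d^2$ terms cancelling. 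Your write-up correctly diagnoses the obstruction but does not contain the two ideas --- the exterior algebraic cap in $\mathbb{CP}^2$ and the branched double cover restoring a usable Bauer--Furuta invariant --- that constitute the proof.
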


Taking $\Sigma$ in Theorem \ref{thm:qpg} to be null-homologous and $L$ to be a knot, we obtain:
\begin{corollary}\label{cor:g4}
Let $X$ and $K$ be as in Theorem \ref{thm:qpg}. Then $g_H( K , X ) = g_4(K)$.
\end{corollary}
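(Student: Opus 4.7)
The plan is that Corollary \ref{cor:g4} should be an essentially immediate consequence of Theorem \ref{thm:qpg}, proved by combining two very short inequalities in opposite directions. The upper bound $g_H(K,X) \le g_4(K)$ is the trivial one: I would take any minimal-genus smooth slice surface $\Sigma_0 \subset B^4$ for $K$ (so $g(\Sigma_0) = g_4(K)$ and $\partial \Sigma_0 = K$) and transplant it into a small collar $S^3 \times [0,1)$ of $\partial X_0$ inside $X_0$. Since $\Sigma_0$ sits inside a ball contained in $X_0$, the class $[\Sigma_0] \in H_2(X_0, \partial X_0;\mathbb{Z}) \cong H_2(X;\mathbb{Z})$ is automatically zero, so this exhibits a null-homologous surface in $X_0$ with boundary $K$ of genus exactly $g_4(K)$.

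For the reverse bound $g_H(K,X) \ge g_4(K)$, I would let $\Sigma \subset X_0$ be an arbitrary connected, properly embedded, oriented surface with $\partial \Sigma = K$ and $[\Sigma] = 0$, and simply plug into the knot form of Theorem \ref{thm:qpg}. With $[\Sigma]=0$ both $[\Sigma]^2$ and $|\langle [\Sigma], c_1(\mathfrak{s})\rangle|$ vanish, so the inequality collapses to
\[
2g(\Sigma) \;\ge\; 2g_4(K),
\]
which gives $g(\Sigma) \ge g_4(K)$ and hence $g_H(K,X) \ge g_4(K)$ upon taking the infimum. Combined with the first paragraph this yields the desired equality. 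There is no genuine obstacle here; the only minor point worth mentioning is the connectedness hypothesis in Theorem \ref{thm:qpg}. If the definition of $g_H$ permits disconnected representatives, one reduces to the connected case by noting that any additional closed components of $\Sigma$ can be deleted (they only contribute non-negative genus), possibly after first tubing them into the component bounding $K$ along an arc inside $X_0$; this operation preserves the relative homology class and does not increase the total genus.
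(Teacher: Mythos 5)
Your proposal is correct and matches the paper's (implicit) argument exactly: the lower bound is Theorem \ref{thm:qpg} applied to a null-homologous $\Sigma$, so that $[\Sigma]^2$ and $\langle [\Sigma], c_1(\mathfrak{s})\rangle$ vanish, while the upper bound $g_H(K,X) \le g_4(K)$ is the obvious one obtained by pushing a minimal-genus slice surface for $K$ into a collar of $\partial X_0$, just as the paper notes in its introduction. Your closing remark on connectedness is a reasonable extra precaution but not needed given the paper's conventions.
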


In the case that $a \in H_2(X ; \mathbb{Z})$ can be represented by a closed surface $S$ for which the adjunction inequality is sharp in the sense that $2g(S)-2 = a^2 + \langle a , c_1(\mathfrak{s}) \rangle$, we find that the lower bound given by Theorem \ref{thm:qpg} differs from the minimal genus by at most $1$:

\begin{corollary}\label{cor:adj2}
Let $X,\mathfrak{s}$ and $K$ be as in Theorem \ref{thm:qpg}. If there exists a closed embedded surface $S \subset X$ representing the homology class $a \in H_2(X ; \mathbb{Z})$ and satisfying $2g(S)-2 = a^2 + \langle a , c_1(\mathfrak{s}) \rangle $, then we have
\[
0 \le g_4(K,X,a)  -\frac{a^2 + \langle a , c_1(\mathfrak{s}) \rangle }{2} - g_4(K) \le 1
\]

\end{corollary}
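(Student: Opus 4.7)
The plan is to establish the two inequalities separately: the lower bound will come straight from Theorem~\ref{thm:qpg}, while the upper bound will be constructive.

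For the lower bound, I would apply Theorem~\ref{thm:qpg} to an arbitrary connected properly embedded surface $\Sigma \subset X_0$ with $\partial \Sigma = K$ and $[\Sigma] = a$. Since $|\langle a, c_1(\mathfrak{s})\rangle| \ge \langle a, c_1(\mathfrak{s})\rangle$, the knot case of the theorem yields
\[
2g(\Sigma) \ge a^2 + \langle a, c_1(\mathfrak{s})\rangle + 2 g_4(K),
\]
and taking the minimum over $\Sigma$ gives $g_4(K,X,a) \ge \tfrac{1}{2}(a^2 + \langle a, c_1(\mathfrak{s})\rangle) + g_4(K)$, which is the left-hand inequality.

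For the upper bound I would exhibit a specific surface $\Sigma \subset X_0$ bounding $K$ in class $a$ whose genus is $g(S) + g_4(K)$. Since the adjunction equality forces $g(S) = \tfrac{1}{2}(a^2 + \langle a, c_1(\mathfrak{s})\rangle) + 1$, such a surface will witness the desired inequality. The construction proceeds in three steps. First, after an isotopy I may assume the closed surface $S$ is disjoint from the open ball $B$ that was removed from $X$, so $S \subset X_0$; picking a small $4$-ball $B_0 \subset X_0$ that meets $S$ in a standardly embedded $2$-disk $D$ and deleting $\mathrm{int}(D)$ from $S$ produces a genus-$g(S)$ surface $S_0$ with boundary an unknot $U \subset \partial B_0$. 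Second, I would connect $B_0$ to the original ball $B$ by a thickened arc and observe that the union is a $4$-ball, so that after an ambient isotopy $U$ can be pushed into $\partial X_0 = S^3$ along a collar without changing the homology class of $S_0$. Third, I would take a genus-$g_4(K)$ slice surface $F \subset B^4$ for $K$, remove a small open disk from its interior, and regard the result as a connected cobordism $C$ of genus $g_4(K)$ from $U$ to $K$ inside a collar $S^3 \times [0,1]$ of $\partial X_0$. Gluing $S_0$ to $C$ along $U$ then yields the desired properly embedded surface $\Sigma \subset X_0$ with $\partial \Sigma = K$, $[\Sigma] = [S] = a$, and a quick Euler characteristic count gives $g(\Sigma) = g(S) + g_4(K)$.

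The main things to be careful about are essentially bookkeeping rather than deep obstructions: checking that the assembled surface is smoothly embedded and that attaching the cobordism $C$ through a collar of $\partial X_0$ does not alter the relative homology class, and verifying that the minimal genus of a connected cobordism in $S^3 \times I$ from an unknot to $K$ is indeed $g_4(K)$ (the non-trivial direction being the lower bound, which follows by capping off the unknot with a disk in $B^4$ to produce a slice surface for $K$).
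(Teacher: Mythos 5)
Your proposal is correct and is essentially the paper's own argument: the lower bound is exactly the paper's application of Theorem~\ref{thm:qpg} (dropping the absolute value), and your construction --- gluing $S$ minus a disk to a genus-$g_4(K)$ cobordism from the unknot to $K$ in a collar of $\partial X_0$ --- is the same surface the paper obtains by tubing $S$ to a pushed-in slice surface for $K$, with genus $g(S)+g_4(K) = \tfrac{1}{2}\left(a^2+\langle a , c_1(\mathfrak{s})\rangle\right)+g_4(K)+1$. (Your final remark about the minimal genus of cobordisms in $S^3\times I$ is not needed: only the existence of a genus-$g_4(K)$ cobordism enters the upper bound.)
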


\begin{example}
Consider the elliptic surface $X = E(2n)$. We have $b_+(X) = 4n-1$ \cite[Proposition 3.1.11]{gs}, so $b_+(X) = 3 \; ({\rm mod} \; 4)$. Suppose that $a \in H_2(X ; \mathbb{Z})$ is orthogonal to the canonical class and $a^2 \ge -2$. Then according to \cite[Theorem 1.1]{ham}, $a$ can be represented by a surface of genus $g$, where $2g-2 = a^2$. Thus Corollary \ref{cor:adj2} gives
\[
0 \le g_4(K,E(2n),a) - \frac{1}{2}a^2 - g_4(K) \le 1
\]
for any quasipositive knot.
\end{example}

When $X$ is symplectic and the class $a \in H_2(X ; \mathbb{Z})$ can be represented by a symplectic surface, we can apply the positive solution of the symplectic Thom conjecture \cite{os2} to Corollary \ref{cor:adj2}:

\begin{theorem}\label{thm:sympl}
Let $X$ be a smooth, closed symplectic $4$-manifold with $b_1(X)=0$ and $b_+(X) = 3 \; ({\rm mod} \; 4)$. If the homology class $a \in H_2(X ; \mathbb{Z})$ can be represented by a closed embedded symplectic surface, then for any quasipositive knot $K$, we have
\[
0 \le g_4(K,X,a) - \frac{ a^2 + \langle a, K_X \rangle }{2} - g_4(K) \le 1,
\]
where $K_X$ is the canonical class of $X$.
\end{theorem}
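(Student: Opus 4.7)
The plan is to deduce Theorem \ref{thm:sympl} as a direct application of Corollary \ref{cor:adj2}. The task therefore reduces to verifying the two hypotheses of that corollary: namely, producing a spin$^c$-structure $\mathfrak{s}$ on $X$ with $SW(X,\mathfrak{s}) \equiv 1 \pmod 2$ and $c_1(\mathfrak{s}) = K_X$, and producing a closed embedded surface representing $a$ that saturates the adjunction equality with respect to that $\mathfrak{s}$.

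For the first input, I would invoke Taubes' theorem: on a closed symplectic $4$-manifold the canonical spin$^c$-structure $\mathfrak{s}_{\mathrm{can}}$ (with $c_1(\mathfrak{s}_{\mathrm{can}}) = -K_X$) satisfies $SW(X,\mathfrak{s}_{\mathrm{can}}) = \pm 1$. Since $b_+(X) \equiv 3 \pmod 4$ is in particular at least $2$, the charge conjugation symmetry of Seiberg--Witten invariants applies, and one gets $\mathfrak{s} := \overline{\mathfrak{s}}_{\mathrm{can}}$ with $c_1(\mathfrak{s}) = K_X$ and $SW(X,\mathfrak{s}) \equiv 1 \pmod 2$. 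Together with the hypotheses $b_1(X) = 0$ and $b_+(X) \equiv 3 \pmod 4$ from the statement, this $\mathfrak{s}$ meets all the requirements imposed on the spin$^c$-structure in Theorem \ref{thm:qpg} and Corollary \ref{cor:adj2}.

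For the second input, let $S \subset X$ be the embedded symplectic surface representing $a$ supplied by hypothesis. Choosing an $\omega$-compatible almost complex structure $J$ for which $S$ is $J$-holomorphic, the splitting $TX|_S = TS \oplus N_S$ is a splitting of complex vector bundles, so
\[
\langle c_1(TX), [S] \rangle = c_1(TS)[S] + c_1(N_S)[S] = (2 - 2g(S)) + [S]^2.
\]
Since $K_X = -c_1(TX,J)$, this rearranges to $2g(S) - 2 = a^2 + \langle a, K_X \rangle = a^2 + \langle a, c_1(\mathfrak{s})\rangle$, which is exactly the equality hypothesis of Corollary \ref{cor:adj2}. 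Morally this is the content of the symplectic Thom conjecture \cite{os2} applied to the class $a$, which says that such a symplectic $S$ already realises the adjunction bound.

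With both hypotheses of Corollary \ref{cor:adj2} verified, the claimed two-sided inequality follows verbatim. The only genuine subtlety is the sign bookkeeping needed to ensure that, after conjugation, the spin$^c$-structure used to invoke Corollary \ref{cor:adj2} carries $c_1 = +K_X$ rather than $-K_X$; all the deep analytic content has been absorbed upstream into Taubes' theorem, the symplectic adjunction computation, and Corollary \ref{cor:adj2}, so there is no further work to do beyond this sign check.
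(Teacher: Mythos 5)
Your proposal is correct and takes essentially the same approach as the paper: choose the anti-canonical spin$^c$-structure $\mathfrak{s}$ with $c_1(\mathfrak{s}) = K_X$ (Taubes' theorem plus charge conjugation giving $SW(X,\mathfrak{s}) = \pm 1 \equiv 1 \pmod 2$), verify via the adjunction formula that the symplectic surface satisfies $2g(S)-2 = a^2 + \langle a, K_X \rangle$, and apply Corollary \ref{cor:adj2}. One small remark: the equality $2g(S)-2 = a^2 + \langle a, K_X \rangle$ is just the elementary adjunction formula for a symplectic (or $J$-holomorphic) surface, which is all that Corollary \ref{cor:adj2} requires; the symplectic Thom conjecture of \cite{os2} is the deeper genus-minimality statement and, as your own computation shows, is not actually needed for the argument.
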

\begin{proof}
Let $\mathfrak{s}$ be the anti-canonical spin$^c$-structure, so $c_1(\mathfrak{s}) = K_X$. Recall that for a symplectic $4$-manifold with $b_+(X)>1$, one has $SW(X,\mathfrak{s}) = \pm 1$. Let $S \subset X$ be a closed symplectic surface representing $a$. Then by the adjunction formula, we have $2g(S)-2 = a^2 + \langle a , K_X \rangle$. Hence we may apply Corollary \ref{cor:adj2}.
\end{proof}

\begin{remark}
In addition to the smooth $H$-slice genus $g_H(K,X)$, one can also define the {\em topological $H$-slice genus} $g_H^{top}(K,X)$ of $K$ in $X$, defined as the minimal genus of a connected, properly embedded, topologically locally flat, null-homologous surface in $X_0 = X \setminus B^4$ bounding $K$. When $X = S^4$, $g_H^{top}(K,S^4) = g_4^{top}(K)$ is the topological $4$-genus of $K$. For a torus knot $K = T_{p,q}$ where $p,q$ are positive coprime integers, it is known that $g_4^{top}(K) < g_4(K)$, except in the cases $K = T_{2,n}, T_{3,4}$ and $T_{3,5}$ \cite{rud2,bfll}. In fact, for such knots one has $g_4^{top}(K) \le \tfrac{6}{7}g_4(K)$ \cite{bfll}. From this and Corollary \ref{cor:g4} we deduce the following: let $K$ be a torus knot other than $T_{2,n}$, $T_{3,4}$ or $T_{3,5}$ and let $X$ be a smooth, closed, oriented $4$-manifold with $b_1(X)=0$, $b_+(X) = 3 \; ({\rm mod} \; 4)$ and having a non-vanishing mod $2$ Seiberg--Witten invariants. Then
\[
g_H^{top}(K,X) \le g_4^{top}(K) \le \frac{6}{7}g_4(K) = \frac{6}{7}g_H(K,X).
\]
In particular, for such an $X$ the difference $g_H(K,X) - g_H^{top}(K,X)$ can be arbitrarily large.
\end{remark}

Taking $L$ to be the unkot, Theorem \ref{thm:qpg} also implies an extension of the adjunction inequality to classes of negative self-intersection without requiring $X$ to be of simple type:

\begin{theorem}\label{thm:adjneg}
Let $X$ be a smooth, closed, oriented $4$-manifold with $b_1(X)=0$ and $b_+(X) = 3 \; ({\rm mod} \; 4)$. Let $\mathfrak{s}$ be a spin$^c$-structure on $X$ with $SW(X , \mathfrak{s}) = 1 \; ({\rm mod} \; 2)$. Suppose $\Sigma$ is a compact, connected, smoothly embedded surface in $X$ of genus $g(\Sigma)$. Then
\[
2g(\Sigma) \ge [\Sigma]^2 + | \langle [\Sigma] , c_1(\mathfrak{s}) \rangle |.
\]
\end{theorem}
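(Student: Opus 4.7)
The plan is to deduce Theorem \ref{thm:adjneg} as a direct consequence of Theorem \ref{thm:qpg}, by puncturing $\Sigma$ and the ambient manifold at a common point. Concretely, I would pick a point $p \in \Sigma$ and choose a small open ball $B$ around $p$ in $X$ such that $\Sigma$ meets $B$ in a flat open disk; setting $X_0 = X \setminus B$, the intersection $\Sigma' := \Sigma \cap X_0$ is then a smooth, connected, properly embedded, oriented surface in $X_0$ whose boundary $\partial \Sigma' \subset \partial X_0 = S^3$ is an unknot $U$.

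The unknot $U$ is quasipositive (it is the closure of the trivial $1$-braid), so Theorem \ref{thm:qpg} applies to $\Sigma'$ and yields
\[
-\chi(\Sigma') \ge [\Sigma']^2 + |\langle [\Sigma'], c_1(\mathfrak{s})\rangle| - \chi_4(U).
\]
I would then substitute the following elementary identifications. Under the isomorphism $H_2(X_0,\partial X_0;\mathbb{Z}) \cong H_2(X;\mathbb{Z})$, the class $[\Sigma']$ corresponds to $[\Sigma]$, so both the self-intersection and the pairing with $c_1(\mathfrak{s})$ are unchanged. Since $\Sigma'$ is obtained from the closed connected genus-$g(\Sigma)$ surface $\Sigma$ by removing an open disk, $\chi(\Sigma') = \chi(\Sigma) - 1 = 1 - 2g(\Sigma)$, so $-\chi(\Sigma') = 2g(\Sigma) - 1$. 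Finally, $U$ bounds a disk in $B^4$, so $\chi_4(U) = 1$.

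Plugging these in gives $2g(\Sigma) - 1 \ge [\Sigma]^2 + |\langle [\Sigma], c_1(\mathfrak{s})\rangle| - 1$, i.e.\ the claimed adjunction inequality. There is no real obstacle here once Theorem \ref{thm:qpg} is in hand; the only minor point to verify is that the ball $B$ can be chosen so that $\Sigma$ meets $\partial B = S^3$ transversely in an unknotted circle bounding an unknotted disk on each side, which is automatic for a sufficiently small round ball centered at a point of $\Sigma$.
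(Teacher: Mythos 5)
Your proof is correct and is exactly the paper's argument: the paper deduces Theorem \ref{thm:adjneg} precisely by taking $L$ to be the unknot in Theorem \ref{thm:qpg}, and your bookkeeping ($\chi(\Sigma') = 1 - 2g(\Sigma)$, $\chi_4(U) = 1$, invariance of the class and pairings under puncturing) fills in the same elementary details. Nothing is missing.
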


Note that this is slighly weaker than the usual adjunction inequality when $[\Sigma]^2 > 0$, which takes the form $2g(\Sigma)-2 \ge [\Sigma]^2 + | \langle [\Sigma] , c_1(\mathfrak{s}) \rangle |$. When $X$ is of simple type the adjunction inequality in the stronger form with $2g(\Sigma)-2 \ge [\Sigma]^2 + | \langle [\Sigma] , c_1(\mathfrak{s}) \rangle |$ was proven in \cite{os2}.

Lastly, applying the same type of argument used in the proof of Theorem \ref{thm:qpg} to a negative definite $4$-manifold, we obtain an extension of the Ozsv\'ath--Szab\'o inequality to quasipositive links.

\begin{theorem}\label{thm:oslink}
Let $X$ be a smooth, compact, oriented negative-definite $4$-manifold with $b_1(X) = 0$ and $\partial X = S^3$. Let $L \subset S^3$ be a quasipositive link. Then for any connected, smooth, properly embedded surface $\Sigma \subset X$ bounding $L$, we have
\[
-\chi(\Sigma) \ge [\Sigma]^2 + \left| [\Sigma] \right| - \chi_4(L).
\]
\end{theorem}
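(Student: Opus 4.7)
The plan is to reduce the quasipositive link case to the quasipositive knot case already handled by Theorem~\ref{thm:osin}. The core idea is to upgrade $L$ to a quasipositive knot $K$ by attaching extra positive bands coming from additional conjugate braid generators, to carry out this band attachment inside a collar of $\partial X \cong S^3$ so that the homology class of the surface is not disturbed, and then to invoke Theorem~\ref{thm:osin} for the new surface bounding $K$.

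Concretely, I would start by fixing a quasipositive braid representation of $L$ together with its associated quasipositive Bennequin surface $F \subset B^4$; Rudolph's theorem (a consequence of the Kronheimer--Mrowka slice--Bennequin inequality) gives $\chi(F) = \chi_4(L)$. Writing $m$ for the number of components of $L$, I would then right-multiply the braid by $m-1$ further conjugate generators whose underlying transpositions successively merge the cycles of the braid permutation. The resulting braid is still quasipositive, its closure is a quasipositive knot $K$, and $F$ is extended by $m-1$ additional positive bands to a quasipositive surface $F' \subset B^4$ bounding $K$ with $\chi(F') = \chi(F) - (m-1)$. Applying Rudolph's theorem again to $K$ gives $\chi(F') = 1 - 2g_4(K)$, and comparing these two expressions yields the identity
\[
2g_4(K) = m - \chi_4(L).
\]
The $m-1$ new bands also assemble into a connected cobordism $C$ from $L$ to $K$ with $\chi(C) = -(m-1)$, which embeds into a collar $S^3 \times [0,1] \subset X$ of $\partial X$. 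Taking $\Sigma' = \Sigma \cup_L C$, I obtain a connected, properly embedded surface in $X$ bounding the knot $K$ and satisfying $2g(\Sigma') = 1 - \chi(\Sigma') = m - \chi(\Sigma)$.

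Finally, I would apply Theorem~\ref{thm:osin} to $\Sigma'$ and $K$ and invoke Hedden's identity $\tau(K) = g_4(K)$ for quasipositive knots. For this I must check that $[\Sigma']^2 = [\Sigma]^2$ and $|[\Sigma']| = |[\Sigma]|$. This follows by capping off $\Sigma$ and $\Sigma'$ inside the closed negative-definite manifold $\hat X = X \cup_{S^3} B^4$: the two resulting closed surfaces differ by a $2$-cycle supported in the contractible region $(S^3 \times [0,1]) \cup B^4$, and therefore represent the same class in $H_2(\hat X; \mathbb{Z})$, which pulls back to give equal self-intersections and equal Donaldson norms in $H_2(X, \partial X; \mathbb{Z})$. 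Assembling the estimates,
\[
m - \chi(\Sigma) = 2g(\Sigma') \ge [\Sigma']^2 + |[\Sigma']| + 2\tau(K) = [\Sigma]^2 + |[\Sigma]| + m - \chi_4(L),
\]
and cancelling $m$ yields the claim. The main technical obstacle is step two: one must arrange the additional generators so that the braid stays quasipositive and the merges proceed one cycle at a time, and one must know that the enlarged quasipositive surface genuinely realizes $\chi_4(K)$. Both points are covered by Rudolph's theorem, and this is the one place where quasipositivity of $L$ is essential.
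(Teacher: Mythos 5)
Your proposal is correct, but it takes a genuinely different route from the paper's proof. The paper obtains Theorem \ref{thm:oslink} as a corollary of its main Seiberg--Witten result: it forms $X' = K3 \# X$, which has $b_+(X') = 3$ and odd mod $2$ Seiberg--Witten invariant for spin$^c$-structures $\mathfrak{s}' = \mathfrak{s}_0 \# \mathfrak{s}$ with $d(X,\mathfrak{s}) = -1$, applies Theorem \ref{thm:qpg1} (and hence all the Bauer--Furuta and branched double cover machinery behind it), and then maximizes $| \langle [\Sigma] , c_1(\mathfrak{s}) \rangle |$ over such spin$^c$-structures to recover the Donaldson norm $\left| [\Sigma] \right|$. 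You instead reduce the link statement to the knot case, Theorem \ref{thm:osin}, which is prior work of Ozsv\'ath--Szab\'o: you append $m-1$ conjugated positive generators to merge the $m$ cycles of the braid permutation (possible since conjugates of the $\sigma_i$ realize every transposition in $S_n$), producing a quasipositive knot $K$; Rudolph's theorem \cite{rud3} applied to the enlarged Bennequin surface gives $\chi_4(K) = \chi_4(L) - (m-1)$, i.e.\ $2g_4(K) = m - \chi_4(L)$ (in fact only the application to $K$ is essential, since $\chi(F) \le \chi_4(L)$ is automatic for any slice surface without closed components); the band cobordism $C$ sits in a collar, so $[\Sigma'] = [\Sigma]$ in $H_2(X, \partial X ; \mathbb{Z})$ --- your capping argument works, though one can see this more directly from the vanishing of the relative second homology of the collar --- and your bookkeeping $2g(\Sigma') = m - \chi(\Sigma)$ is right; finally Theorem \ref{thm:osin} together with Hedden's identity $\tau(K) = g_4(K)$ for quasipositive knots \cite{hed,pla} yields the inequality after cancelling $m$. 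What each approach buys: yours shows the link version follows formally from the knot case with no new gauge-theoretic input at all, so it is considerably more elementary relative to the literature; the paper's route is uniform with its indefinite-manifold argument, and that method is genuinely needed for Theorem \ref{thm:qpg1} itself, where no $\tau$-type bound is available. Note also that your band trick uses quasipositivity twice (for the merged knot and for the Bennequin surface), so unlike a hypothetical $\tau(L)$-based argument it does not bear on the conjectural extension $-\chi(\Sigma) \ge [\Sigma]^2 + \left| [\Sigma] \right| + 2\tau(L) - n$ for arbitrary links mentioned after Theorem \ref{thm:oslink}.
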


We note here that an extension of the Ozsv\'ath--Szab\'o $\tau$ invariant for links was defined in \cite{cav} and one has that $-\chi(\Sigma) \ge 2\tau(L)-n$ for a smooth, connected, properly embedded surface $\Sigma$ in $X$ bounding $L$, where $n$ is the number of components of the link \cite[Proposition 1.4]{cav}. This suggests that there should be a generalisation of the Ozsv\'ath--Szab\'o inequality to arbitrary links taking the form $-\chi(\Sigma) \ge [\Sigma]^2 + \left| [\Sigma] \right| + 2\tau(L)-n$.

\subsection{Remark on orientations}

Throughout the paper knots and links are oriented. If $\Sigma$ is a properly embedded surface bounding a knot or link $L$, then we require $\Sigma$ to be oriented and that the induced orientation on $\partial \Sigma$ agrees with the given orientation on $L$.

\subsection{Structure of the paper}

In Section \ref{sec:qp} we prove some results concerning surfaces in $\mathbb{CP}^2 \setminus B^4$ bounding the mirror $\overline{L}$ of a quasipositive link $L \subset S^3$, leading to Lemma \ref{lem:c0}. In Section \ref{sec:adj} we first prove an extension of the Seiberg--Witten adjunction inequality to the case of non-vanishing Bauer--Furuta invariant (Proposition \ref{prop:bfadj}). We then use this and Lemma \ref{lem:c0} to prove the main results of the paper.

\subsection{Acknowledgements}

We thank Hokuto Konno for comments on a draft of this paper.

\section{Quasipositive links}\label{sec:qp}

Let $B_n$ denote the braid group on $n$ strands and $\sigma_1, \dots , \sigma_{n-1}$ the standard generators. A link $L$ is said to be {\em quasipositive} if it can be realised as the braid closure of a braid $\beta$ which is a product of $\sigma_1, \dots , \sigma_{n-1}$ and their conjugates \cite{rud}. Every quasipositive link can be realised as the transverse intersection of a plane algebraic curve $\Gamma \subset \mathbb{C}^2$ with a $3$-sphere $S^3 = \partial B \subset \mathbb{C}^2$ bounding a ball $B \subset \mathbb{C}^2$ \cite{rud}. Conversely, any link $L \subset S^3$ constructed in this manner is quasipositive \cite{bo}.

Let $L \subset S^3$ be a quasipositive link. Hence we can realise $L$ as the transverse intersection $L = \partial B \cap \Gamma$ of a plane algebraic curve $\Gamma \subset \mathbb{C}^2$ with an open ball $B \subset \mathbb{C}^2$. Let $\overline{\Gamma} \subset \mathbb{CP}^2$ be the projective completion of $\Gamma$. Here we identify $\mathbb{C}^2$ as the complement $\mathbb{C}^2 = \mathbb{CP}^2 \setminus L_{\infty}$ of a projective line $L_\infty$ in $\mathbb{CP}^2$.

\begin{lemma}
There exists a non-singular sextic curve $\Sigma \subset \mathbb{CP}^2$ which is disjoint from $\overline{B} \cup Sing(\overline{\Gamma})$ and which meets $\overline{\Gamma}$ transversally.
\end{lemma}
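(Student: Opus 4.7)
My plan is to realize $\Sigma$ as a generic member of the complete linear system $|\mathcal{O}_{\mathbb{CP}^2}(6)|$ of plane sextics, parametrized by a projective space $\mathbb{P}^N$ with $N = \binom{8}{2}-1 = 27$. I will express each of the four desired properties as an open condition on $\mathbb{P}^N$ and show that the intersection of these four open sets is non-empty.

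First, let $U_1 \subset \mathbb{P}^N$ consist of those sextics $\Sigma$ with $\Sigma \cap \overline{B} = \emptyset$. Since $\overline{B}$ is compact, $U_1$ is open in the analytic topology, and it is non-empty: choose any projective line $\ell \subset \mathbb{CP}^2$ disjoint from $\overline{B}$ (for instance the line at infinity $L_\infty$, as $\overline{B} \subset \mathbb{C}^2$), then a small analytic perturbation of the sextic $6\ell$ still misses $\overline{B}$. Next, let $U_2$ be the set of sextics avoiding the finite set $\mathrm{Sing}(\overline{\Gamma})$; this is the complement of a finite union of hyperplanes in $\mathbb{P}^N$, hence Zariski open and dense. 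Let $U_3$ be the locus of smooth sextics: since $|\mathcal{O}(6)|$ is very ample and base-point-free, Bertini's theorem shows $U_3$ is Zariski open and dense. Finally, let $U_4$ be the set of sextics meeting the smooth locus of $\overline{\Gamma}$ transversally; another application of Bertini (to the incidence variety of tangencies between a variable sextic and the fixed smooth curve $\overline{\Gamma} \setminus \mathrm{Sing}(\overline{\Gamma})$) shows $U_4$ is Zariski open and dense.

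Any sextic in $U_1 \cap U_2 \cap U_3 \cap U_4$ satisfies all the required conditions: by $U_2$, its intersection with $\overline{\Gamma}$ lies in the smooth locus, so the transversality from $U_4$ is genuine transversality of $\Sigma$ with $\overline{\Gamma}$. It remains to see that this intersection is non-empty. The sets $U_2$, $U_3$, $U_4$ are each Zariski open dense in the irreducible variety $\mathbb{P}^N$, so their complements are proper algebraic subvarieties; in particular these complements have empty interior in the analytic topology. Therefore the analytic open non-empty set $U_1$ cannot be contained in the union of these three complements, and $U_1 \cap U_2 \cap U_3 \cap U_4 \neq \emptyset$.

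The one genuine subtlety is that the condition defining $U_1$ is only analytically open rather than Zariski open, so the usual argument that a finite intersection of Zariski-dense opens in an irreducible variety is non-empty does not apply directly. The fix, as above, is simply to pass to the analytic topology, in which each $U_i$ is open and each of $U_2, U_3, U_4$ is still dense; the key point being that $U_1$ is non-empty, which is witnessed by taking a concrete sextic supported near the line at infinity.
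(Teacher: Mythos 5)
Your proof is correct and is essentially the same as the paper's: both arguments realize the desired conditions as open subsets of the projective space of sextics, take $6\ell$ for a line $\ell$ disjoint from $\overline{B}$ as the witness for the analytically open condition, and conclude with Bertini. The only detail-level differences are that you obtain transversality to $\overline{\Gamma}$ from a Bertini/incidence-variety dimension count where the paper exhibits an explicit witness (six lines through a point $p \notin \overline{\Gamma}$ avoiding the branch points of the projection $\rho$), and that you spell out the Zariski-dense-meets-nonempty-analytic-open step that the paper leaves implicit when it concludes $U_1 \cap U_2 \neq \emptyset$.
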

\begin{proof}
Let $P = \mathbb{P}( H^0( \mathbb{CP}^2 , \mathcal{O}(1) ) )$ be the dual projective space of $\mathbb{CP}^2$, the space of lines in $\mathbb{CP}^2$. Let $U \subset P$ be the set of lines disjoint from $\overline{B}$. This is an open subset of $P$ and is non-empty, since the line at infinity is disjoint from $\overline{B}$. For each $x \in Sing(\overline{\Gamma})$, let $\ell_x \subset P$ be the set of lines passing through $x$. Then $\ell_x$ is a projective line in $P$. Now since $Sing(\overline{\Gamma})$ is finite, it follows that $V = P \setminus  \left( \cup_{x \in Sing(\overline{\Gamma})} \ell_x \right)$ is a dense open subset of $P$. Hence $W = U \cap V$ is non-empty. So there exists a non-empty open subset $W$ of lines in $\mathbb{CP}^2$ disjoint from $\overline{B}$ and $Sing(\overline{\Gamma})$. 

Now consider the linear system $R = \mathbb{P}( H^0( \mathbb{CP}^2 , \mathcal{O}(6)))$ of sextic curves in $\mathbb{CP}^2$. Let $U_1 \subset R$ denote the open subset of sextic curves disjoint from $\overline{B} \cup Sing(\overline{\Gamma})$. Then $U_1$ is non-empty. Indeed, let $N$ be a projective line in $\mathbb{CP}^2$ disjoint from $\overline{B}$ and $Sing(\overline{\Gamma})$. Then the sextic with divisor $6N$ belongs to $U_1$. 

Choose a point $p \in \mathbb{CP}^2$ not lying $\overline{\Gamma}$. The space of lines in $\mathbb{CP}^2$ through $p$ can be identified with $\mathbb{CP}^1$ and this determines a regular map $\rho : \overline{\Gamma} \to \mathbb{CP}^1$ which sends a point $q \in \overline{\Gamma}$ to the line joining $p$ and $q$. Let $T \subset \mathbb{CP}^1$ be the finite set $\rho( Sing(\overline{\Gamma}))$. Then $\rho : \overline{\Gamma} \setminus \rho^{-1}(T) \to \mathbb{CP}^1 \setminus T$ is a branched covering. Any point in $\mathbb{CP}^1 \setminus T$ which is not a branch point of $\rho$ corresponds to a line in $\mathbb{CP}^2$ such that each point of intersection with $\overline{\Gamma}$ has multiplicity $1$. Let $U_2 \subset R$ be the set of sextics whose intersection multiplicities with $\overline{\Gamma}$ all equal $1$. Then $U_2$ is a non-empty open subset of $R$ because we can take a sextic which is the union of six lines corresponding to six distinct non-branch points of $\rho$ in $\mathbb{CP}^1 \setminus T$. So $U_1 \cap U_2$ is a non-empty open subset of $R$. Bertini's theorem implies that there exists a non-singular sextic which belongs to $U_1 \cap U_2$.
\end{proof}

Let $S \subset \mathbb{CP}^2$ be a non-singular sextic curve which is disjoint from $\overline{B} \cup Sing(\overline{\Gamma})$ and which meets $\overline{\Gamma}$ transversally. Note that $S$ is connected, since any two distinct components of $S$ would intersect in a singular point. Suppose $\overline{\Gamma}$ has degree $d$. Then $\overline{\Gamma}$ and $S$ meet in exactly $6d$ points, by B\'ezout's theorem. Let $W = \mathbb{CP}^2 \setminus B$. Then $W$ is a $4$-manifold with boundary $\partial W = S^3$. Consider $\Gamma_0 = \overline{\Gamma} \setminus B$. Then $\Gamma_0$ is a properly embedded surface in $X$ meeting $\partial X$ in the link $\overline{L} = \partial W \cap \Gamma_0$ (we obtain the mirror $\overline{L}$ of $L$, because the orientation on $S^3 = \partial W$ is opposite to the orientation obtained by viewing $S^3$ as the boundary of $B$).

\begin{lemma}\label{lem:c0}
There exists a smoothly embedded, connected, oriented surface $C_0 \subset W$ having the following properties:
\begin{itemize}
\item[(1)]{$C_0$ meets $\partial W$ transversally in $\overline{L}$,}
\item[(2)]{$C_0$ meets $S$ transversally in $6d$ points,}
\item[(3)]{$\chi(C_0) = 3d-d^2 - \chi_4(L)$,}
\item[(4)]{The homology class of $C_0$ in $H_2( W , \partial W ; \mathbb{Z}) \cong \mathbb{Z}$ is $d[\ell]$, where $[\ell]$ is the class of a projective line,}
\item[(5)]{Each connected component of $C_0$ meets $\overline{L}$.}
\end{itemize}
\end{lemma}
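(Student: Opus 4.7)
The plan is to realize $C_0$ as the restriction to $W$ of a topological smoothing of $\overline{\Gamma}$, modified by local handle attachments to pin down the Euler characteristic exactly. First, I would reduce to the case where every irreducible component of $\overline{\Gamma}$ meets $\overline{B}$: any irreducible component disjoint from $\overline{B}$ contributes nothing to $L = \partial B \cap \overline{\Gamma}$, so one can simply discard it, replacing $d$ by the degree of the resulting curve.

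Next, around each singular point $p$ of $\overline{\Gamma}$ I would choose a small open ball $B_p \subset \mathbb{CP}^2$, with the $B_p$ mutually disjoint and disjoint from $\partial B \cup S$, and replace $\overline{\Gamma} \cap B_p$ (topologically a cone on the link of the singularity at $p$) by a Milnor fiber of that singularity. The resulting smoothly embedded oriented surface $\widetilde{\Gamma} \subset \mathbb{CP}^2$ is diffeomorphic to a smooth degree-$d$ plane curve, so $[\widetilde{\Gamma}] = d[\ell]$ and $\chi(\widetilde{\Gamma}) = 3d - d^2$. Since the smoothings are supported away from $\partial B$ and $S$, $\widetilde{\Gamma}$ meets $\partial B$ transversely in $L$ and meets $S$ transversely in $6d$ points (by B\'ezout).

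Setting $C_0' := \widetilde{\Gamma} \cap W$ and $F := \widetilde{\Gamma} \cap \overline{B}$, inclusion-exclusion gives $\chi(C_0') = 3d - d^2 - \chi(F)$. Every irreducible component of $\overline{\Gamma}$ meets both $\overline{B}$ (by the reduction) and the line at infinity $L_\infty \subset W$ (since it has positive degree), so each connected component of $\widetilde{\Gamma}$ crosses $\partial B$; hence $F$ has no closed components and $\chi(F) \le \chi_4(L)$ by definition of the Murasugi characteristic. The gap $\chi_4(L) - \chi(F)$ is non-negative and even, since both Euler characteristics come from orientable surfaces bounding $L$ and therefore have parity $|L| \pmod 2$. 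I would then attach $\tfrac{1}{2}(\chi_4(L) - \chi(F))$ local trivial $1$-handles to $C_0'$ at generic interior points of $W$ to produce $C_0$ with $\chi(C_0) = 3d - d^2 - \chi_4(L)$, preserving the homology class, the boundary $\overline{L}$, and transversality with $S$. Property (5) then follows: a closed component of $C_0'$ would be a connected component of $\widetilde{\Gamma}$ disjoint from $\overline{B}$, contradicting the reduction, and local handle attachments create no closed components.

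The hard part will be achieving the Euler characteristic exactly, since the topological smoothing only gives the inequality $\chi(C_0') \ge 3d - d^2 - \chi_4(L)$ in general. The handle-adjustment succeeds because the gap has even parity---a consequence of $F$ and any $\chi_4$-realizing surface having $|L|$ boundary components---so it can be closed by an integer number of trivial $1$-handles. Notably, this approach uses only the definition of $\chi_4(L)$ and B\'ezout's theorem, bypassing the deeper Kronheimer--Mrowka slice-genus theorem for quasipositive links.
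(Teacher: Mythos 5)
Your construction is correct in substance but takes a genuinely different route from the paper. The paper does not smooth $\overline{\Gamma}$ topologically: it perturbs $\overline{\Gamma}$ inside the linear system of degree $d$ curves to a nearby \emph{non-singular algebraic} curve $C$, sets $C_0 = C \setminus B$, and obtains property (3) as an exact equality by invoking Rudolph's theorem \cite{rud3} that $\chi(C \cap \overline{B}) = \chi_4(L)$ for quasipositive links --- which rests on the Kronheimer--Mrowka local Thom conjecture \cite{km1}. You replace this gauge-theoretic input by the trivial direction $\chi(F) \le \chi_4(L)$ (valid once $F$ has no closed components, since $\chi_4$ is defined as a maximum) together with the observation that the lemma only demands an exact value of $\chi(C_0)$, which can always be arranged \emph{downward} by trivial $1$-handles; your parity argument is right, as both $\chi(F)$ and $\chi_4(L)$ are congruent to the number of components of $L$ mod $2$. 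This is a real simplification: since a larger $\chi(C_0)$ only strengthens the inequality extracted in the proof of Theorem \ref{thm:qpg1}, your weaker-input version of the lemma suffices for every application in the paper, so the appeal to \cite{rud3} can in fact be eliminated. What the paper's route buys instead is the sharper (unused) fact that the algebraic cap already realizes $\chi_4(L)$ on the nose, with no handle correction.

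There is, however, one step you must delete: the opening reduction discarding irreducible components of $\overline{\Gamma}$ disjoint from $\overline{B}$ and ``replacing $d$.'' As written this proves the lemma only for the reduced degree, not the given $d$, and in context that is fatal to the application: in the proof of Theorem \ref{thm:qpg1} the paper makes $d$ arbitrarily large precisely by adjoining to $\Gamma$ lines disjoint from $\overline{B}$ (this is needed to force $[\widetilde{\Sigma}]^2 = 2d^2 + 2[\Sigma]^2 > 0$ before applying Proposition \ref{prop:bfadj}), and your reduction would undo exactly that enlargement. Fortunately the reduction is unnecessary. Any two irreducible components of a plane curve intersect by B\'ezout, so $\overline{\Gamma}$ is connected as a topological space; since each Milnor fiber is connected and is glued along the same circles as the cone it replaces, $\widetilde{\Gamma}$ is connected as well. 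Hence a closed component of $C_0' = \widetilde{\Gamma} \cap W$, or of $F = \widetilde{\Gamma} \cap \overline{B}$, would be open and closed in $\widetilde{\Gamma}$ and therefore all of it, which is impossible because $\widetilde{\Gamma}$ meets $\partial B$ in $L \neq \emptyset$. This gives property (5) and the no-closed-components hypothesis needed for $\chi(F) \le \chi_4(L)$ without discarding anything. (A shared blemish: like the paper's own proof, your argument establishes (5) but not literal connectedness of $C_0$, since removing $B$ may disconnect the curve; this is harmless, as only (5) is used in the sequel.)
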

\begin{proof}
First note that it is enough to find a smoothly embedded, connected oriented surface $C_0 \subset W$ satisfying (2)-(5) and such that $C_0$ meets $\partial W$ transversally in a link which is isotopic to $\overline{L}$, for then we can perform an isotopy on $C_0$ supported in a neighbourhood of $\partial W$ so as to satisfy (1) while still maintaining conditions (2)-(5).

Let $C \subset \mathbb{CP}^2$ be a non-singular algebraic curve of degree $d$, obtained by deforming $\overline{\Gamma}$ within the space of degree $d$ algebraic curves. In particular, $C$ is connected. By choosing $C$ to be sufficiently close to $\overline{\Gamma}$, we can assume that the intersection of $C$ with $S^3 = \partial B$ remains transverse and that $C \cap \partial B$ is isotopic to $L$. Similarly, we can assume that $C$ meets $S$ transversally. Now take $C_0 = C \setminus B$. Then $C_0$ meets $\partial W$ transversally in a link which is isotopic to $\overline{L}$. 

We claim that $C_0$ satisfies (5). For if not, $C_0$ contains a connected component $U$ which has no boundary. Then $U$ is a complex submanifold of $\mathbb{CP}^2$. Chow's theorem implies that $U$ is algebraic, so $U$ is an irreducible component of $C$. But $C$ is irreducible, which implies $C = U$. But this is impossible as $C$ has non-empty intersection with $\overline{B}$.

$C$ is an algebraic curve of degree $d$, so $C$ and $S$ meet in $6d$ points. Since $S$ is disjoint from $\overline{B}$, it follows that $C_0$ satisfies (2). Moreover, this implies (4), since $S$ represents $6[\ell]$ and all the intersections of $S$ and $C_0$ are positive.

As $C$ is an algebraic curve of degree $d$, we have $\chi(C) = 3d-d^2$ by the degree-genus formula and hence $\chi(C_0) = 3d-d^2 - \chi( C \cap \overline{B} )$. So to prove (3) it remains to show that $\chi( C \cap \overline{B} ) = \chi_4(L)$. In fact, this proven in \cite{rud3}.
\end{proof}

\section{Adjunction inequalities}\label{sec:adj}

We will need an extension of the adjunction inequality to the case of non-vanishing Bauer--Furuta invariants. Recall that to a compact oriented smooth manifold $M$ with $b_1(M) = 0$ and a spin$^c$-structure $\mathfrak{s}$, one may define an invariant called the {\em Bauer--Furuta invariant}, which takes values in a certain equivariant stable cohomotopy group \cite{bf}. In what follows, we will concern ourselves only with the corresponding non-equivariant Bauer--Furuta invariant
\[
BF(M , \mathfrak{s}) \in \pi^{st}_{d(M,\mathfrak{s})+1},
\]
where $\pi^{st}_k$ denotes the $k$-th stable homotopy group of spheres and 
\[
d(M,\mathfrak{s}) = \frac{ c_1(\mathfrak{s})^2 - \sigma(M) }{4} - 1 + b_+(M)
\]
is the expected dimension of the Seiberg--Witten moduli space. 

The following result is the adjunction inequality for Bauer--Furuta invariants. Special cases of this result have appeared in \cite{fkm,fkmm,fkm2,mmp}.

\begin{proposition}\label{prop:bfadj}
Let $M$ be a smooth, compact oriented $4$-manifold with $b_1(M)=0$ and $b_+(M)>1$. Let $\mathfrak{s}$ be a spin$^c$-structure on $M$ for $BF(M , \mathfrak{s}) \neq 0$. Suppose $\Sigma$ is a compact, connected, smoothly embedded surface in $M$ and that $[\Sigma]^2 \ge 0$. Then:
\begin{itemize}
\item[(1)]{If $g \ge 1$ then
\[
2g(\Sigma)-2 \ge [\Sigma]^2 + | \langle [\Sigma] , c_1(\mathfrak{s}) \rangle |
\]
}
\item[(2)]{If $g=0$, then $[\Sigma]$ is a torsion class.}
\end{itemize}
\end{proposition}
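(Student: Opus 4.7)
The plan is to follow the Kronheimer--Mrowka/Morgan--Szab\'o--Taubes proof of the classical Seiberg--Witten adjunction inequality, upgraded to the stable cohomotopy setting. The two main tools will be the multiplicativity of the Bauer--Furuta invariant under blow-ups (a special case of Bauer's connected sum formula) and a Weitzenb\"ock-plus-neck-stretching vanishing theorem, showing that when $[\Sigma]^2 = 0$ and adjunction fails, the finite-dimensional approximation to the Seiberg--Witten map is stably null-homotopic.

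For the reduction to $[\Sigma]^2 = 0$, after replacing $\mathfrak{s}$ by its conjugate if necessary (which preserves nonvanishing of $BF$), I would assume $k := \langle c_1(\mathfrak{s}),[\Sigma]\rangle \geq 0$ and set $n = [\Sigma]^2 \geq 0$. Blow up $M$ at $n$ distinct points of $\Sigma$ to obtain $\tilde M = M \mathbin{\#} n\overline{\mathbb{CP}^2}$ with exceptional classes $E_1,\dots,E_n$, and let $\tilde\Sigma$ be the proper transform, so $[\tilde\Sigma] = [\Sigma] - E_1 - \cdots - E_n$, $[\tilde\Sigma]^2 = 0$, $g(\tilde\Sigma) = g$. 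Take the spin$^c$-structure $\tilde{\mathfrak{s}}$ with $c_1(\tilde{\mathfrak{s}}) = c_1(\mathfrak{s}) + E_1 + \cdots + E_n$; then $\langle c_1(\tilde{\mathfrak{s}}),[\tilde\Sigma]\rangle = k + n$, and the combination $[\Sigma]^2 + k$ appearing in the target inequality is preserved. The Bauer--Furuta blow-up formula expresses $BF(\tilde M, \tilde{\mathfrak{s}})$ as $BF(M,\mathfrak{s})$ smashed with $n$ copies of $BF(\overline{\mathbb{CP}^2},\mathfrak{s}_{E_i})$, each of which is a unit $\pm 1 \in \pi^{st}_0$, so $BF(\tilde M,\tilde{\mathfrak{s}}) \ne 0$. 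Hence it suffices to prove the proposition under the additional hypothesis $[\Sigma]^2 = 0$.

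For case (1), suppose for contradiction that $k > 2g - 2$. A tubular neighborhood of $\Sigma$ is $\Sigma \times D^2$ with boundary $Y = \Sigma \times S^1$; equip $\Sigma$ with its hyperbolic metric and $Y$ with the resulting product metric, of constant scalar curvature $-2$. Stretch a collar of $Y$ in $M$ to $Y \times [-T, T]$ and let $T \to \infty$. Finite-dimensional approximation provides representatives $f_T$ of $BF(M,\mathfrak{s})$ whose preimages of $0$ correspond to SW solutions for the stretched metric. The Weitzenb\"ock identity on $\mathbb{R} \times Y$ yields the classical bound $|\langle c_1(\mathfrak{s}),[\Sigma]\rangle| \leq 2g - 2$ on any finite-energy Seiberg--Witten solution; since $k > 2g - 2$, this excludes all solutions for $T$ large. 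Therefore $f_T$ avoids the zero section and is stably null-homotopic, contradicting $BF(M,\mathfrak{s}) \ne 0$. Case (2) is handled similarly: after the blow-up reduction, $\tilde\Sigma \cong S^2$ and $Y = S^2 \times S^1$ admits a product positive-scalar-curvature metric, so the Weitzenb\"ock identity on the stretched neck forces any SW solution to be reducible; under the hypothesis $b_+ > 1$ a generic perturbation then eliminates reducibles as well, again giving $BF = 0$.

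The main obstacle is upgrading the absence of Seiberg--Witten solutions on the stretched manifold to a genuine \emph{stable null-homotopy} of the finite-dimensional Bauer--Furuta map, rather than the weaker numerical statement $SW = 0$. This requires uniform analytic estimates on long necks that guarantee the maps $f_T$ land in a fixed compact region of the target disjoint from the zero section, together with compatible control of the spectral cutoffs defining the finite-dimensional approximation as $T \to \infty$. The special cases treated in \cite{fkm,fkmm,fkm2,mmp} should provide the technical templates for this analysis.
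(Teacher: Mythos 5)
Your blow-up reduction and your case (1) are essentially the paper's own proof. The paper also reduces to $[\Sigma]^2 = 0$ by blowing up and invoking Bauer's connected sum formula (it tubes $\Sigma$ into the reversed exceptional spheres rather than taking proper transforms, which has the identical homological effect), and it handles the square-zero, $g \ge 1$ case by the Kronheimer--Mrowka neck-stretching argument. Also, the ``main obstacle'' you flag at the end is not a real one: the standard fact, used as such in the paper, is the contrapositive of what you are trying to prove --- if $BF(M,\mathfrak{s}) \neq 0$ then the Seiberg--Witten equations admit a solution for \emph{every} metric and perturbation; this is built into the properness of the monopole map underlying the finite-dimensional approximation, and no new uniform neck estimates are required.

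The genuine gap is in case (2). Your sketch --- positive scalar curvature on the neck forces any solution to be reducible, a generic perturbation eliminates reducibles, hence $BF = 0$ --- proves too much, because it never uses that $[\Sigma]$ is non-torsion, and the conclusion is false without that hypothesis: every $4$-manifold contains a trivially embedded null-homologous sphere of square zero, so your argument as written would show $BF(M,\mathfrak{s}) = 0$ for every $(M,\mathfrak{s})$, contradicting e.g.\ $BF(K3,\mathfrak{s}_0) = \eta \neq 0$. Concretely, two steps fail. First, PSC on $S^2 \times S^1$ controls only the translation-invariant limit on the cylinder $\mathbb{R} \times S^2 \times S^1$: it makes the \emph{limiting} configuration reducible there, which yields only $\langle c_1(\mathfrak{s}) , [\Sigma] \rangle = 0$; it does not make solutions on the closed stretched manifold reducible, so perturbing generically on $M$ to kill reducibles is beside the point. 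Second, the actual mechanism, due to Fintushel--Stern (\cite[Lemma 5.1]{fs}, adapted to Bauer--Furuta invariants in \cite[Theorem 4.5]{mmp}, which is exactly what the paper cites for this case) is a \emph{large} perturbation by a closed $2$-form $\eta$ with $\int_\Sigma \eta \neq 0$: for $r \gg 0$ the $r\eta$-perturbed equations have no solutions on the sufficiently stretched manifold, forcing $BF = 0$. Such an $\eta$ exists precisely when $[\Sigma]$ is non-torsion in real homology --- this is where the torsion hypothesis enters, and it is the very subtlety the paper points out when correcting the hypothesis ``$[\Sigma] \neq 0$'' of \cite[Theorem 4.5]{mmp} to ``$[\Sigma]$ non-torsion.'' To repair your proof, either cite \cite[Theorem 4.5]{mmp} as the paper does, or carry out the large-perturbation argument; the PSC observation alone cannot suffice.
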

\begin{proof}
The proof is similar to the case where the Seiberg--Witten invariant of $(M , \mathfrak{s})$ is non-zero. The main point is that since $BF(M , \mathfrak{s}) \neq 0$, it follows that the Seiberg--Witten equations for $(M , \mathfrak{s})$ admits a solution for any metric and any perturbation.

Consider first the case that $g \ge 1$ and $[\Sigma]^2 = 0$. The standard neck stretching argument of Kronheimer--Mrowka \cite{km1} implies that there exists a translation invariant solutions to the Seiberg--Witten equations on $\Sigma \times \mathbb{R}$, where $\Sigma$ is given a constant scalar curvature metric and $\mathbb{R}$ the standard Euclidean metric. As in \cite{km1}, this implies that $2g(\Sigma)-2 \ge \langle [\Sigma] , c_1(\mathfrak{s}) \rangle$. Reversing orientation on $\Sigma$ if necessary, we obtain $2g(\Sigma)-2 \ge  | \langle [\Sigma] , c_1(\mathfrak{s}) \rangle |$, which proves the result in this case.

The case $g=0$ was proven in \cite[Theorem 4.5]{mmp} (note however that the condition $[\Sigma] \neq 0$ in the statement of \cite[Theorem 4.5]{mmp} should be replaced with the stronger condition that $[\Sigma]$ is non-torsion. This can be seen by examination of the proof of \cite[Lemma 5.1]{fs}, on which the proof of \cite[Theorem 4.5]{mmp} is based).

Lastly, suppose that $g \ge 1$ and $[\Sigma]^2 = n > 0$. By possibly reversing orientation on $\Sigma$, we can assume that $\langle [\Sigma] , c_1(\mathfrak{s}) \rangle \ge 0$. Let $M_n$ be manifold obtained by blowing up $M$ at $n$ points disjoint from $\Sigma$. So $M_n$ is diffeomorphic to $M \#^n \overline{\mathbb{CP}^2}$. Let $\mathfrak{s}_i$ be a spin$^c$-structure on the $i$-th copy of $\overline{\mathbb{CP}^2}$ such that $S_i = c_1(\mathfrak{s}_i)$ represents the $i$-th exceptional divisor and let $\widetilde{\Sigma}$ denote the connected sum of $\Sigma$ with the $2$-spheres representing $-S_1, \dots , -S_n$. Then $\widetilde{\Sigma}$ has the same genus as $\Sigma$ and  $[\widetilde{\Sigma}]^2 = 0$. Define a spin$^c$-structure $\widetilde{\mathfrak{s}}$ on $M_n$ by gluing together the spin$^c$-structures $\mathfrak{s}, \mathfrak{s}_1, \dots , \mathfrak{s}_n$. Then $c_1(\widetilde{\mathfrak{s}}) = c + S_1 +  \cdots + S_n$. Since $\overline{\mathbb{CP}^2}$ is negative definite, has vanishing first Betti number and $d( \overline{\mathbb{CP}^2} , \mathfrak{s}_i ) = -1$, it follows that $BF(\overline{\mathbb{CP}^2} , \mathfrak{s}_i) \in \pi^{st}_0$ is the identity map. The connected sum formula for Bauer--Furuta invariants \cite[Theorem 1.1]{b2}, implies that $BF( M_n , \widetilde{\mathfrak{s}}) \neq 0$. Therefore, we are in the self-intersection zero case, so as shown above we have 
\[
2g(\Sigma)-2 \ge | \langle [\widetilde{\Sigma}] , c_1(\widetilde{\mathfrak{s}}) \rangle| = | \langle [\Sigma] , c_1(\mathfrak{s}) \rangle + n | = |\langle [\Sigma] , c_1(\mathfrak{s}) \rangle| + [\Sigma]^2,
\]
where we used $\langle [\Sigma] , c_1(\mathfrak{s}) \rangle \ge 0$ and $[\Sigma]^2 = n$. This is the adjunction inequality.
\end{proof}

\begin{theorem}\label{thm:qpg1}
Let $X$ be a smooth, compact, oriented $4$-manifold with $b_1(X)=0$ and $b_+(X) = 3 \; ({\rm mod} \; 4)$. Suppose that there is a spin$^c$-structure $\mathfrak{s}$ with $SW(X , \mathfrak{s}) = 1 \; ({\rm mod} \; 2)$. Let $X_0$ be the $4$-manifold with boundary $S^3$ obtained by removing an open ball from $X$. Let $L \subset S^3$ be a quasipositive link. Then for any connected, oriented, smooth, properly embedded surface $\Sigma \subset X_0$ bounding $L$, we have
\[
-\chi(\Sigma) \ge [\Sigma]^2 + | \langle [\Sigma] , c_1(\mathfrak{s}) \rangle | - \chi_4(L).
\]
\end{theorem}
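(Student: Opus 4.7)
The strategy is to construct a smooth closed surface $\Sigma' = \Sigma \cup C_0$ inside $X \# \mathbb{CP}^2$ using Lemma \ref{lem:c0} and then invoke the Bauer--Furuta adjunction inequality of Proposition \ref{prop:bfadj}. First I would identify $X_0 \cup_{S^3} W \cong X \# \mathbb{CP}^2$, where $W = \mathbb{CP}^2 \setminus B$, via the orientation-reversing boundary identification that matches $L \subset \partial X_0$ with $\overline{L} \subset \partial W$. Lemma \ref{lem:c0} provides a surface $C_0 \subset W$, and gluing $\Sigma$ to $C_0$ along the common link produces a smoothly embedded closed surface $\Sigma' \subset X \# \mathbb{CP}^2$. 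Connectedness of $\Sigma'$ follows from property (5) of Lemma \ref{lem:c0} together with the connectedness of $\Sigma$.

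Using $\chi(L) = 0$ and properties (3)--(4) of Lemma \ref{lem:c0}, the invariants of $\Sigma'$ are $\chi(\Sigma') = \chi(\Sigma) + 3d - d^2 - \chi_4(L)$ and $[\Sigma']^2 = [\Sigma]^2 + d^2$ inside $H_2(X \# \mathbb{CP}^2) \cong H_2(X) \oplus \mathbb{Z}\langle [\ell]\rangle$. I would equip $X \# \mathbb{CP}^2$ with the spin$^c$-structure $\mathfrak{s}' = \mathfrak{s} \# \mathfrak{s}_0$, where $\mathfrak{s}_0$ is the canonical or anti-canonical spin$^c$-structure on $\mathbb{CP}^2$, so that $\langle c_1(\mathfrak{s}_0), [\ell]\rangle = 3\epsilon$ with $\epsilon = \pm 1$ chosen opposite in sign to $\langle [\Sigma], c_1(\mathfrak{s})\rangle$; then $\langle [\Sigma'], c_1(\mathfrak{s}')\rangle = \langle [\Sigma], c_1(\mathfrak{s})\rangle + 3\epsilon d$ has absolute value $\bigl||\langle [\Sigma], c_1(\mathfrak{s})\rangle| - 3d\bigr|$. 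A Bertini-type argument (replacing $\Gamma$ by its union with a generic smooth curve disjoint from $\overline{B} \cup \mathrm{Sing}(\overline{\Gamma})$) lets me take $d$ arbitrarily large, so as to ensure the hypothesis $[\Sigma']^2 \geq 0$ of Proposition \ref{prop:bfadj}.

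The main obstacle is verifying that $BF(X \# \mathbb{CP}^2, \mathfrak{s}') \neq 0$. Here I would use Bauer's connected sum formula $BF(X \# \mathbb{CP}^2, \mathfrak{s}') = BF(X, \mathfrak{s}) \wedge BF(\mathbb{CP}^2, \mathfrak{s}_0)$, combined with the fact that $SW(X, \mathfrak{s}) \equiv 1 \pmod 2$ and $b_+(X) \equiv 3 \pmod 4$ force $BF(X, \mathfrak{s})$ to be a non-trivial class whose smash product with $BF(\mathbb{CP}^2, \mathfrak{s}_0)$ remains non-zero. The congruence on $b_+(X)$ is precisely what places $BF(X, \mathfrak{s})$ in the stable homotopy degree where mod $2$ Seiberg--Witten detects its non-vanishing. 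This stable-homotopy input is the crucial technical step and the principal obstacle.

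Granting this, Proposition \ref{prop:bfadj} gives $-\chi(\Sigma') \geq [\Sigma']^2 + |\langle [\Sigma'], c_1(\mathfrak{s}')\rangle|$, which after cancelling the shared $d^2$ rearranges to
\[
-\chi(\Sigma) + \chi_4(L) \geq [\Sigma]^2 + 3d + \bigl||\langle [\Sigma], c_1(\mathfrak{s})\rangle| - 3d\bigr|.
\]
If $|\langle [\Sigma], c_1(\mathfrak{s})\rangle| \geq 3d$ the right-hand side equals $[\Sigma]^2 + |\langle [\Sigma], c_1(\mathfrak{s})\rangle|$ exactly; otherwise it equals $[\Sigma]^2 + 6d - |\langle [\Sigma], c_1(\mathfrak{s})\rangle|$, which is still at least $[\Sigma]^2 + |\langle [\Sigma], c_1(\mathfrak{s})\rangle|$ under the assumption $|\langle [\Sigma], c_1(\mathfrak{s})\rangle| \leq 3d$. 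Either way, rearranging delivers the claimed inequality.
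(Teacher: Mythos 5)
Your overall scaffolding (glue $\Sigma$ to the curve $C_0$ from Lemma \ref{lem:c0} inside $X_0 \cup_{S^3} W \cong X \# \mathbb{CP}^2$, take $d$ large, apply Proposition \ref{prop:bfadj}, and do the Euler-characteristic bookkeeping) matches the paper's setup, and your final arithmetic with $3d + \bigl| |\langle [\Sigma], c_1(\mathfrak{s})\rangle| - 3d \bigr| \ge |\langle [\Sigma], c_1(\mathfrak{s})\rangle|$ is fine as far as it goes. But the step you yourself flag as the ``principal obstacle'' is not merely unproven --- it is false, and this is exactly where the paper's proof takes a different turn. The manifold $\mathbb{CP}^2$ admits a metric of positive scalar curvature, so the Weitzenb\"ock formula excludes irreducible solutions, and since $b_+(\mathbb{CP}^2) = 1 > 0$ a generic perturbation also removes the reducibles; hence for some metric and perturbation the Seiberg--Witten equations on $(\mathbb{CP}^2, \mathfrak{s}_0)$ have no solutions whatsoever. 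As noted in the proof of Proposition \ref{prop:bfadj}, nonvanishing of the Bauer--Furuta invariant would force solutions for \emph{every} metric and perturbation, so $BF(\mathbb{CP}^2, \mathfrak{s}_0) = 0$, and Bauer's connected sum formula then gives $BF(X \# \mathbb{CP}^2, \mathfrak{s} \# \mathfrak{s}_0) = BF(X,\mathfrak{s}) \wedge BF(\mathbb{CP}^2,\mathfrak{s}_0) = 0$. No choice of spin$^c$-structure on the $\mathbb{CP}^2$ summand and no congruence condition on $b_+(X)$ can rescue this (note also $b_+(X \# \mathbb{CP}^2) \equiv 0 \pmod 4$, outside the range of Bauer's mod $2$ detection result). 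The paper's remark after Theorem \ref{thm:qpg1} states precisely this: Proposition \ref{prop:bfadj} cannot be applied directly to $Y = X \# \mathbb{CP}^2$.

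The missing idea is the double branched cover, and it is the reason Lemma \ref{lem:c0} carries the sextic $S$ and condition (2), which your plan never uses --- a warning sign, since those conditions are tailor-made for a covering argument. The paper first blows up so that $d(X,\mathfrak{s}) = 0$ (needed for Bauer's result that $BF(X,\mathfrak{s}) = \eta \in \pi_1^{st}$; your proposal also skips this normalisation), then passes to the double cover $\pi : \widetilde{Y} \to Y$ branched over the sextic $S$. Since the double cover of $\mathbb{CP}^2$ branched over a smooth sextic is a $K3$ surface, $\widetilde{Y} \cong K3 \# X \# X$, and the connected sum formula now gives $BF(\widetilde{Y}, \mathfrak{s}_{\widetilde{Y}}) = \eta^3 \neq 0$ in $\pi_3^{st}$ --- this is where the hypothesis $b_+(X) \equiv 3 \pmod 4$ genuinely enters. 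One then applies Proposition \ref{prop:bfadj} to $\widetilde{\Sigma} = \pi^{-1}(\Sigma')$, which satisfies $[\widetilde{\Sigma}]^2 = 2[\Sigma]^2 + 2d^2 > 0$ for $d$ large, and uses Riemann--Hurwitz, $\chi(\widetilde{\Sigma}) = 2\chi(\Sigma') - 6d$, where the $6d$ branch points are supplied by condition (2) of Lemma \ref{lem:c0} and by quasipositivity (all intersections of $\Sigma'$ with $S$ are positive). Unwinding the resulting inequality recovers the theorem. So your gluing construction and numerics survive, but the adjunction inequality must be applied upstairs in $\widetilde{Y}$, not in $Y$ itself.
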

\begin{proof}
First note that by blowing up $X$ at points disjoint from $\Sigma$, we can assume $d(X , \mathfrak{s}) = 0$. Doing this does not alter the value of $[\Sigma]^2 + | \langle [\Sigma] , c_1(\mathfrak{s}) \rangle |$.

Let $Y = X \# \mathbb{CP}^2$ be the connected sum of $\mathbb{CP}^2$ and $X$. More precisely, we obtain $Y$ by identifying $W = \mathbb{CP}^2 \setminus B$ and $X_0$ along their boundary $S^3$. Here we orient $S^3$ so that it is an ingoing boundary of $\mathbb{CP}^2$ and an outgoing boundary of $X_0$. Since $S$ is disjoint from $\overline{B}$, we may regard it an an embedded surface in $Y$. Let $\pi : \widetilde{Y} \to Y$ be the double cover of $Y$ branched over $S$. To see that the branched cover exists, first consider the double cover $\pi_Z : Z \to \mathbb{CP}^2$ branched along $S$. In fact $Z$ is a $K3$ surface \cite[Corollary 7.3.25]{gs} ($Z$ is a compact complex surface with $b_1(Z)=0$ and using the adjunction formula, one finds that the canonical bundle of $Z$ is trivial). Then $\pi^{-1}(B)$ consits of two balls in $Z$. By removing these balls and gluing in two copies of $X_0$, we obtain $\widetilde{Y}$. Further, this shows that $\widetilde{Y}$ is diffeomorphic to the connected sum $Z \# X \# X$ of a $K3$ surface and two copies of $X$.

Recall that $\pi^{st}_1 \cong \mathbb{Z}_2$ and is generated by the Hopf map $\eta : S^3 \to S^2$. According to \cite[Proposition 4.4]{b2}, if $b_+(M) = 3 \; ({\rm mod} \; 4)$ and $SW(M , \mathfrak{s}) = 1 \; ({\rm mod} \; 2)$, then $BF(M , \mathfrak{s}) = \eta \in \pi^{st}_1$. In particular, this is the case for $(X,\mathfrak{s})$ and also for the $K3$ surface $Z$, equipped with the unique spin$^c$ structure $\mathfrak{s}_Z$ which comes from a spin-structure. Let $\mathfrak{s}_{\widetilde{Y}}$ denote the spin$^c$-structure on $\widetilde{Y} = Z \# X \# X$ obtained by gluing together the spin$^c$-structures $\mathfrak{s}_Z , \mathfrak{s}, \mathfrak{s}$. The connected sum formula for Bauer--Furuta invariants implies that $BF( \widetilde{Y} , \mathfrak{s}_{\widetilde{Y}}) = \eta^3 \in \pi^{st}_3$. Now since $\eta^3 \neq 0$, we have that the Bauer--Furuta invariant of $(\widetilde{Y} , \mathfrak{s}_{\widetilde{Y}})$ is non-zero. 

Let $C_0 \subset W$ be as in Lemma \ref{lem:c0} and let $\Sigma' = \Sigma \cup_L C_0$ be the surface in $Y$ obtained by attaching $\Sigma$ and $C_0$ along their boundaries. We have that $\Sigma'$ is connected, since $\Sigma$ is connected and every component of $C_0$ meets $L$. Let $\widetilde{\Sigma} = \pi^{-1}(\Sigma')$. Since $C_0$ meets $S$ transversally, it follows that $\widetilde{\Sigma}$ is a smooth, compact, embedded surface in $\widetilde{Y}$ and that the restriction of $\pi : \widetilde{\Sigma} \to \Sigma$ is a branched double cover with $6d$ branch points. We have that
\[
[\widetilde{\Sigma}]^2 = 2[\Sigma']^2 = 2[C_0]^2 + 2[\Sigma]^2 = 2d^2 + 2[\Sigma]^2.
\]
Now we observe that for any given quasipositive link $L$, we can choose the algebraic curve $\Gamma$ to have arbitrarily large degree. Indeed, we can replace $\Gamma$ by the union of $\Gamma$ with any number of lines which are disjoint from $\overline{B}$. Thus we can take $d$ large enough that $[\widetilde{\Sigma}]^2 = 2d^2 + 2[\Sigma]^2 > 0$. Having chosen such a $\Gamma$, we may apply Proposition \ref{prop:bfadj} to obtain
\begin{align*}
-\chi(\widetilde{\Sigma}) &\ge [\widetilde{\Sigma}]^2 + | \langle [\widetilde{\Sigma}] , c_1( \mathfrak{s}_{\widetilde{Y}} ) \rangle |  \\
& \ge 2[\Sigma]^2 + 2d^2 + 2 | \langle [\Sigma] , c_1(\mathfrak{s}) \rangle |.
\end{align*}

Furthermore, by Riemann--Hurwitz, we have 
\[
\chi(\widetilde{\Sigma}) = 2\chi(\Sigma') - 6d = 2\chi(\Sigma) + 2\chi(C_0) - 6d = 2\chi(\Sigma)-2d^2 - 2\chi_4(L),
\]
where we used that $\chi(C_0) = 3d-d^2 - \chi_4(L)$ from Lemma \ref{lem:c0}. Putting these together, we get
\[
-\chi(\Sigma) + d^2 + \chi_4(L) \ge [\Sigma]^2 + d^2 + | \langle [\Sigma] , c_1(\mathfrak{s}) \rangle |,
\]
which gives the result.
\end{proof}

\begin{remark}
The proof of Theorem \ref{thm:qpg1} can be thought of as a generalisation of the proof of \cite[Corollary 1.3]{km2}.
\end{remark}

\begin{corollary}
Let $X$ be a smooth, compact, oriented $4$-manifold with $b_1(X)=0$ and $b_+(X) = 3 \; ({\rm mod} \; 4)$. Suppose that there is a spin$^c$-structure $\mathfrak{s}$ with $d(X , \mathfrak{s}) = 0$ and $SW(X , \mathfrak{s}) = 1 \; ({\rm mod} \; 2)$. Let $X_0$ be the $4$-manifold with boundary $S^3$ obtained by removing an open ball from $X$. Let $K \subset S^3$ be a quasipositive knot. Then for any connected, oriented, smooth, properly embedded, homologically trivial surface $\Sigma \subset X_0$ bounding $K$, we have $g(\Sigma) \ge g_4(K)$. Thus $g_4(K)$ is the minimal genus of a homologically trivial surface in $X$ bounding $K$.
\end{corollary}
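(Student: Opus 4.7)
The plan is to deduce this corollary directly from Theorem~\ref{thm:qpg1} by specialising to the null-homologous case with a knot boundary. Since $\Sigma$ is homologically trivial, we have $[\Sigma]^2 = 0$ and $\langle [\Sigma], c_1(\mathfrak{s}) \rangle = 0$, so the right-hand side of the inequality in Theorem~\ref{thm:qpg1} collapses to $-\chi_4(K)$. Because $K$ is a knot, $\Sigma$ has a single boundary component and hence $\chi(\Sigma) = 1 - 2g(\Sigma)$, while by definition of the Murasugi characteristic $\chi_4(K) = 1 - 2g_4(K)$. Substituting yields
\[
2g(\Sigma) - 1 = -\chi(\Sigma) \ge -\chi_4(K) = 2g_4(K) - 1,
\]
and the lower bound $g(\Sigma) \ge g_4(K)$ follows at once.

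For the second assertion, I would verify the matching upper bound $g_H(K,X) \le g_4(K)$ by a standard collar argument. Let $F \subset B^4$ be a smooth slice surface for $K$ realising the slice genus, arranged so that near $\partial B^4 = S^3$ it is a product $K \times [0,\epsilon]$. Viewing $X_0 = X \setminus B^4$ with its own collar $S^3 \times [0,1] \subset X_0$ of $\partial X_0$, we re-attach the non-product part of $F$ on the $X_0$ side of $S^3$, obtaining a properly embedded, connected, null-homologous surface in $X_0$ with boundary $K$ and genus $g_4(K)$. Combined with the lower bound, this shows that $g_4(K)$ is indeed the minimum.

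There is essentially no obstacle once Theorem~\ref{thm:qpg1} is in hand; this corollary is an immediate specialisation. The only mild subtlety is the collar reconstruction needed to realise a slice surface from $B^4$ inside $X_0$, which is routine. Note also that the hypothesis $d(X,\mathfrak{s}) = 0$ in this corollary is superfluous for the lower bound, since the proof of Theorem~\ref{thm:qpg1} reduces to this case by blowing up without altering $[\Sigma]^2$ or $\langle [\Sigma], c_1(\mathfrak{s}) \rangle$; we may simply invoke the theorem directly.
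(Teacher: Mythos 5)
Your proposal is correct and follows exactly the route the paper intends: the corollary is stated without proof as an immediate specialisation of Theorem \ref{thm:qpg1} to a null-homologous $\Sigma$ (so $[\Sigma]^2 = \langle [\Sigma], c_1(\mathfrak{s})\rangle = 0$), combined with the standard upper bound realised by pushing a genus-$g_4(K)$ slice surface into a collar of $\partial X_0$ --- the same collar construction the paper itself invokes in the proof of Corollary \ref{cor:adj2}. Your observation that the hypothesis $d(X,\mathfrak{s})=0$ is superfluous is also accurate, since Theorem \ref{thm:qpg1} already handles the reduction by blowing up.
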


\begin{remark}
We make some comments on the proof of Theorem \ref{thm:qpg1}:
\begin{itemize}
\item{In the proof of Theorem \ref{thm:qpg1} it is essential that we pass to the branched cover of $Y = X \# \mathbb{CP}^2$. This is because $b_+(\mathbb{CP}^2) = 1 \; ({\rm mod} \; 4)$ and so the adjunction inequality Proposition \ref{prop:bfadj} can not be directly applied to $Y$.}
\item{To obtain a bound on $\chi(\Sigma)$, one may consider a simpler strategy of capping off $X_0$ with a $4$-ball $B^4$ and closing up $\Sigma$ with a surface in $B^4$ with Euler characteristic equal to $\chi_4(L)$. This will give
\[
-\chi(\Sigma) \ge [\Sigma]^2 + | \langle [\Sigma] , c_1(\mathfrak{s}) \rangle | + \chi_4(L)
\]
provided $[\Sigma]^2 \ge 0$ and $[\Sigma]$ is non-torsion. Notice that this differs from Theorem \ref{thm:qpg1} in that the right hand side has $+\chi_4(L)$ whereas in Theorem \ref{thm:qpg1} we have $-\chi_4(L)$. This inequality is usually a much weaker bound on $\chi(\Sigma)$ than Theorem \ref{thm:qpg1}. For example, if $L$ is a knot then $\chi_4(L) = 1-2g_4(L)$, so $-\chi_4(L) \ge \chi_4(L)$, except when $g_4(L) = 0$.}
\item{Our inequality may be compared with similar adjunction-type inequalities in \cite{mmp}. For each spin$^c$-structure for which the Ozsv\'ath--Szab\'o mixed invariant $\Phi_{X,\mathfrak{s}}$ is non-zero, we get
\[
2g(\Sigma)-2 \ge [\Sigma]^2 + \langle [\Sigma] , c_1(\mathfrak{s}) \rangle - 2\nu^+(\overline{K})
\]
provided either $[\Sigma]^2 \ge 2\nu^+(\overline{K})$ or $X$ has Ozsv\'ath--Szab\'o simple type \cite[Theorem 1.1]{mmp}. Here $\nu^+(K)$ is the concordance invariant constructed by Hom and Wu \cite{howu}. Note that $\nu^+$ is always non-negative, so this inequality is weaker than Theorem \ref{thm:qpg1}, unless $2-2\nu^+(\overline{K}) > 2g_4(K)$ which can only happen if $K$ is slice.}
\item{In the proof of Theorem \ref{thm:qpg1}, it is crucial that $L$ is quasipositive. This ensures that the number of intersection points of $\Sigma' = \Sigma \cup_L C_0$ with $S$ is exactly $6d = |\langle [\Sigma'] , [S] \rangle|$. For non-quasipositive knots, Lemma \ref{lem:c0} must generally fail since there are knots which do not satisfy Theorem \ref{thm:qpg1}. For example, the left handed trefoil $K = \overline{T_{2,3}}$ is $H$-slice in $K3$ \cite[Example 2.5]{mmp}, but $g_4(K)=1$. This implies that Lemma \ref{lem:c0} fails for $K$.}
\end{itemize}
\end{remark}

\begin{proof}[Proof of Corollary \ref{cor:adj2}:]
Suppose $a$ can be represented by a surface $S \subset X$ satisfying $2g(S)-2 = a^2 + \langle a , c_1(\mathfrak{s}) \rangle$. Now let $X_0 = X \setminus B$, where the ball $B$ is chosen with $\overline{B}$ disjoint from $S$. Let $K \subset S^3$ be a quasipositive knot. Choose a connected, smooth, properly embedded surface $\Sigma \subset X_0$ bounding $K$, supported in a collar neighbourhood of $\partial X_0$ and having genus equal to $g_4(K)$. Let $\Sigma' \subset X_0$ be the surface obtained by attaching a handle joining $\Sigma$ to $S$. Then $g(\Sigma') = g(S)+g_4(K)$ and $\Sigma'$ is a properly embedded surface representing the class $a$ and bounding $K$. Therefore
\[
g_4(K,X,a) \le g(\Sigma') = g(S) + g_4(K) = \frac{a^2  + \langle a  , c_1(\mathfrak{s}) \rangle}{2} + g_4(K)+1.
\]
The inequality $g_4(K,X,a) \ge \dfrac{a^2  + \langle a  , c_1(\mathfrak{s}) \rangle}{2} + g_4(K)$ is obtained by applying Theorem \ref{thm:qpg1}.
\end{proof}

\begin{proof}[Proof of Theorem \ref{thm:oslink}:]
Let $\mathfrak{s}$ be a spin$^c$-structure on $X$ such that $d(X , \mathfrak{s}) = -1$. Since $X$ is negative definite this is equivalent to $c_1(\mathfrak{s})^2 = -b_2(X)$. Consider the $4$-manifold $X' = K3 \# X$ with spin$^c$-structure $\mathfrak{s}' = \mathfrak{s}_0 \# \mathfrak{s}$, where $\mathfrak{s}_0$ is the unique spin$^c$-structure on $K3$ coming from the spin-structure. Then $b_+(X') = 3$ and $SW(X' , \mathfrak{s}') = SW(K3 , \mathfrak{s}_0) = 1$, by the blowup formula for Seiberg--Witten invariants. So we may apply Theorem \ref{thm:qpg1} to deduce the inequality
\[
-\chi(\Sigma) \ge [\Sigma]^2 + | \langle [\Sigma] , c_1(\mathfrak{s}) \rangle | - \chi_4(L),
\]
for any spin$^c$-structure $\mathfrak{s}$ on $X$ for which $c_1(\mathfrak{s})^2 = -b_2(X)$. By Donaldson's diagonalisation theorem, the intersection form on $X$ is diagonalisable. From this, it is easily seen that the maximum of $| \langle [\Sigma] , c_1(\mathfrak{s}) \rangle |$ over all such spin$^c$-structures on $X$ equals $\left| [\Sigma]\right|$. Thus we find that
\[
-\chi(\Sigma) \ge [\Sigma]^2 + \left| [\Sigma] \right| -\chi_4(L).
\]
\end{proof}



\bibliographystyle{amsplain}

\begin{thebibliography}{99}
\bibitem{bfll}S. Baader, P. Feller, L. Lewark, L. Liechti, On the topological $4$-genus of torus knots. {\em Trans. Amer. Math. Soc.} {\bf 370} (2018), no. 4, 2639-2656. 
\bibitem{bf}S. Bauer, M. Furuta, A stable cohomotopy refinement of Seiberg--Witten invariants. I. {\em Invent. Math.} {\bf 155} (2004), no. 1, 1-19. 
\bibitem{b2}S. Bauer, A stable cohomotopy refinement of Seiberg--Witten invariants. II. {\em Invent. Math.} {\bf 155} (2004), no. 1, 21-40. 
\bibitem{bofo}M. Boileau, L. Fourrier, Knot theory and plane algebraic curves. {\em Chaos Solitons Fractals} {\bf 9} (1998), no. 4-5, 779-792. 
\bibitem{bo}M. Boileau, S. Orevkov, Quasi-positivit\'e d'une courbe analytique dans une boule pseudo-convexe. {\em C. R. Acad. Sci. Paris S\'er. I Math.} {\bf 332} (2001), no. 9, 825-830. 
\bibitem{cav}A. Cavallo, The concordance invariant tau in link grid homology. {\em Algebr. Geom. Topol.} {\bf 18} (2018), no. 4, 1917-1951. 
\bibitem{chh}T. D. Cochran, S. Harvey, P. Horn, Filtering smooth concordance classes of topologically slice knots. {\em Geom. Topol.} {\bf 17} (2013), no. 4, 2103-2162. 
\bibitem{cn}A. Conway, M. Nagel, Stably slice disks of links. {\em J. Topol.} {\bf 13} (2020), no. 3, 1261-1301. 
\bibitem{fs}R. Fintushel, R. J. Stern, Immersed spheres in $4$-manifolds and the immersed Thom conjecture. {\em Turkish J. Math.} {\bf 19} (1995), no. 2, 145-157. 
\bibitem{fkm}M. Furuta, Y. Kametani, H. Matsue, H. Spin $4$-manifolds with signature $=-32$. {\em Math. Res. Lett.} {\bf 8} (2001), no. 3, 293-301.
\bibitem{fkmm}M. Furuta, Y. Kametani, H. Matsue, N. Minami, Homotopy theoretical considerations of the Bauer--Furuta stable homotopy Seiberg--Witten invariants. Proceedings of the Nishida Fest (Kinosaki 2003), 155-166, Geom. Topol. Monogr., {\bf 10}, Geom. Topol. Publ., Coventry, (2007).
\bibitem{fkm2}M. Furuta, Y. Kametani, N. Minami, Stable-homotopy Seiberg--Witten invariants for rational cohomology $K3\#K3$'s. {\em J. Math. Sci. Univ. Tokyo} {\bf 8} (2001), no. 1, 157-176. 
\bibitem{gs}R. E. Gompf, A. Stipsicz, {\em $4$-manifolds and Kirby calculus}. Graduate Studies in Mathematics, {\bf 20}. American Mathematical Society, Providence, RI, (1999). xvi+558 pp.
\bibitem{ham}M. J. D. Hamilton, The minimal genus problem for elliptic surfaces. {\em Israel J. Math.} {\bf 200} (2014), no. 1, 127-140. 
\bibitem{hed}M. Hedden, Notions of positivity and the Ozsv\'ath--Szab\'o concordance invariant. {\em J. Knot Theory Ramifications} {\bf 19} (2010), no. 5, 617-629.
\bibitem{hera}M. Hedden, K. Raoux, Knot Floer homology and relative adjunction inequalities, arXiv:2009.05462 (2020).
\bibitem{howu}J. Hom, Z. Wu, Four-ball genus bounds and a refinement of the Ozsv\'ath--Szab\'o tau invariant. {\em J. Symplectic Geom.} {\bf 14} (2016), no. 1, 305-323. 
\bibitem{imt}N. Iida, A. Mukherjee, M. Taniguchi, An adjunction inequality for the Bauer--Furuta type invariants, with applications to sliceness and $4$-manifold topology, arXiv:2102.02076 (2021).
\bibitem{kms}A. Kjuchukova, A. N. Miller, A. Ray, S. Sakall{\i}. Slicing knots in definite 4-manifolds, arXiv:2112.14596 (2021).
\bibitem{kmt}H. Konno, J. Miyazawa, M. Taniguchi, Involutions, knots, and Floer K-theory, arXiv:2110.09258 (2021).
\bibitem{km2}P. B. Kronheimer, T. S. Mrowka, Gauge theory for embedded surfaces. I. {\em Topology} {\bf 32} (1993), no. 4, 773-826. 
\bibitem{km1}P. B. Kronheimer, T. S. Mrowka, The genus of embedded surfaces in the projective plane. {\em Math. Res. Lett.} {\bf 1} (1994), no. 6, 797-808. 
\bibitem{mmp}C. Manolescu, M. Marengon, L. Piccirillo. Relative genus bounds in indefinite four-manifolds, arXiv:2012.12270 (2020).
\bibitem{mmsw}C. Manolescu, M. Marengon, S. Sarkar, M. Willis. A generalization of Rasmussen's invariant, with applications to surfaces in some four-manifolds. {\em Duke Math. J.} (to appear), arXiv:1910.08195 (2019).
\bibitem{mur}K. Murasugi, On a certain numerical invariant of link types. {\em Trans. Amer. Math. Soc.} {\bf 117} (1965), 387-422. 
\bibitem{nou}M. A. Nouh, Genera and degrees of torus knots in $\mathbb{CP}^2$. {\em J. Knot Theory Ramifications} {\bf 18} (2009), no. 9, 1299-1312. 
\bibitem{os2}P. Ozsv\'ath, Z. Szab\'o, The symplectic Thom conjecture. {\em Ann. of Math.} (2) {\bf 151} (2000), no. 1, 93-124.
\bibitem{os1}P. Ozsv\'ath, Z. Szab\'o, Knot Floer homology and the four-ball genus. {\em Geom. Topol.} {\bf 7} (2003), 615-639. 
\bibitem{pla}O. Plamenevskaya, Bounds for the Thurston--Bennequin number from Floer homology. {\em Algebr. Geom. Topol.} {\bf 4} (2004), 399-406. 
\bibitem{rud}L. Rudolph, Algebraic functions and closed braids. {\em Topology} {\bf 22} (1983), no. 2, 191-202. 
\bibitem{rud2}L. Rudolph, Some topologically locally-flat surfaces in the complex projective plane. {\em Comment. Math. Helv.} {\bf 59} (1984), no. 4, 592-599. 
\bibitem{rud3}L. Rudolph, Quasipositivity as an obstruction to sliceness. {\em Bull. Amer. Math. Soc. (N.S.)} {\bf 29} (1993), no. 1, 51-59. 
\bibitem{sat}K. Sato, Topologically slice knots that are not smoothly slice in any definite $4$-manifold. {\em Algebr. Geom. Topol.} {\bf 18} (2018), no. 2, 827-837. 

\end{thebibliography}

\end{document}